\newtheorem{theorem}{Theorem}[section]
\newcommand{\RN }{\mathbb R^n}
\newcommand{\R }{\mathbb R}
\newcommand{\supp}{{\rm supp}\,}
\newcommand{\cz}{Calder\'on-Zygmund }
\newcommand{\mMlog}{{\mathcal M}_{L(\log L)}}
\numberwithin{equation}{section}
\let \la=\lambda
\let \e=\varepsilon
\let \d=\delta
\let \f=\varphi
\let \P=\Phi
\newcommand{\Tsigma}{T_{\Sigma\, {\rm \bf b}}}
\newcommand{\Tpi}{T_{\Pi\, {\rm \bf b}}}
\begin{document}

\title [Multilinear commutators]
{Multilinear commutators}

\title [End-point estimates for  iterated commutators]
{End-point estimates for iterated commutators of multilinear
singular integrals }

\authors

\

\author[C. P\'erez]{Carlos P\'erez}
\address{Carlos P\'erez\\
Departamento de An\'alisis Matem\'atico, Facultad de Matem\'aticas,
Universidad de Sevilla, 41080 Sevilla, Spain}
\email{carlosperez@us.es}

\author[G. Pradolini]{Gladis Pradolini}
\address{G. Pradolini\\
Universidad Nacional del Litoral - CONICET\\
G\"uemes 3450, 3000 Santa Fe, Argentina}
\email{gladis.pradolini@gmail.com}

\author[R. H. Torres]{Rodolfo H. Torres}
\address{Rodolfo H. Torres\\
Department of Mathematics, University of Kansas, 405 Snow Hall 1460
Jayhawk Blvd, Lawrence, Kansas 66045-7523, USA }
\email{torres@math.ku.edu}

\author[R. Trujillo-Gonz\'alez]{Rodrigo Trujillo-Gonz\'alez}
\address{Rodrigo Trujillo-Gonz\'alez\\
Departamento de An\'alisis Matem\'atico,  Universidad de La Laguna,
38271 La Laguna, S.C. de Tenerife, Spain } \email{rotrujil@ull.es}


\begin{abstract}

Iterated commutators of multilinear Calder\'on-Zygmund operators and
pointwise multiplication with functions in $BMO$ are studied in
products of Lebesgue spaces. Both strong type and weak end-point
estimates are obtained, including weighted results involving the
vectors weights of the multilinear Calder\'on-Zygmund theory
recently introduced in the literature. Some better than expected
estimates for certain multilinear operators are presented too.

\end{abstract}

\keywords{Multilinear singular integrals, Calder\'on-Zygmund theory,
maximal operators, weighted norm inequalities, commutators.}
\subjclass[2000]{42B20, 42B25}


\thanks{ The authors  would like to acknowledge  the
support of the following grants. First author and
fourth author: Spanish Ministry of Science and Innovation grant MTM2009-08934 and MTM2008-
05891 respectevely, Second author: Instituto de Matem\'atica Aplicada del Litoral (IMAL-CONICET)-
Universidad Nacional del Litoral, Third author: NSF grant DMS0800492 and a General Research Fund allocation of the University of Kansas. }

\maketitle

\section{Introduction and main results}\label{introduction}

The commutator of a linear \cz operator $T$ and a $BMO$ function
$b$,
$$
T_b(f)=[b,T](f)=bT(f)-T(bf),
$$
was first studied by Coifman, Rochberg, and Weiss \cite{CRW} who
proved that
$$
T_b:L^p(\RN) \to L^p(\RN)
$$
for all $1<p<\infty$. This can be seen as a bilinear result,
$$
BMO(\RN) \times L^p(\RN) \to L^p(\RN),
$$
since actually
\begin{equation}\label{bmolptolp}
\|T_b(f)\|_{L^p} \lesssim \|b\|_{BMO}\, \|f\|_{L^p}.
\end{equation}
Using duality, the above estimate has as an immediate consequence
for $1<p<\infty$ the bilinear estimate
\begin{equation}\label{better}
\|gT(f)-fT^*(g)\|_{H^1}\lesssim \|g\|_{L^{p'}}\, \|f\|_{L^p},
\end{equation}
where $p'$ is the dual exponent of $p$, $H^1$ is the Hardy space,
and $T^*$ is the transpose of $T$.  Note that \eqref{better} is a
{\it better than expected estimate},  since trivially by H\"older's
inequality and the boundedness of $T$,
$$
\|gT(f)-fT^*(g)\|_{L^1}\lesssim \|g\|_{L^{p'}}\, \|f\|_{L^p}.
$$
Both \eqref{bmolptolp} and \eqref{better} put in evidence that some
subtle cancellations are taking place. These estimates have found
many important applications in other areas of operator theory and
partial differential equations.

Another interesting feature of the commutators $T_b$ is the fact
that they fail to satisfy the typical weak end-point $L^1$ estimate
of the \cz theory. As a remedial feature though, they do satisfy
 an alternative $L(\log L)$
end-point estimate, as proved by P\'erez in \cite{P1}  (see \cite{PP}  for a different proof). \vspace{2mm}

Much of the analysis of  linear commutators has been extended to
other context such as weighted spaces, spaces of homogeneous type,
multiparameter and multilinear settings. Higher order and iterated
commutators have been consider too. The literature is by now quite
vast. We will only recount here  the multilinear situation which is
the focus of this article. The purpose of  the present work is to
prove the optimal results for the  iterated commutators  and an
associated multi(sub)linear operator.  In this sense, this article
complements and completes the theory developed by Lerner et al in
\cite{LOPTT}, where the reader will find further bibliography in the
subject. \vspace{2mm}

Let $T$ be an  $m$-linear \cz operator as defined by Grafakos and
Torres in \cite{GT4}  and \cite{GT6} (see the next section for
complete definitions). In particular, such operators satisfy
\begin{equation}\label{standardbound}
T:L^{p_1}(\RN) \times \dots \times L^{p_m}(\RN) \to L^p(\RN)
\end{equation}
whenever $1<p_1,\dots,p_m<\infty$  and
\begin{equation}\label{holderrelation}
\frac1p=\frac{1}{p_1}+\dots+\frac{1}{p_m},
\end{equation}
and also the end-point result
%
%
\begin{equation}\label{weaktype}
T:L^{1}(\RN) \times \dots \times L^{1}(\RN) \to L^{1/m,\infty}(\RN).
\end{equation}

Let  ${\rm \bf b }=(b_1,\dots,b_m)$ be in $BMO^m$.  The commutator
of ${\rm \bf b}$ and the $m$-linear Calder\'on-Zygmund operator $T$,
denoted here by $T_{\Sigma\, {\rm \bf b}}$\footnote{The notation
$T_{\vec b}$ was used instead in \cite{PT} and \cite{LOPTT}. We use
the new notation to better differentiate  this commutator  from the
iterated ones we want to study in this article. The notation for
both types of commutators is also motivated by the estimates they
satisfy.}, was  introduced by P\'erez and Torres in \cite{PT} and is
defined via
\begin{equation}\label{masclara}
T_{\Sigma \, {\rm \bf b}}\,(f_{1},\dots,f_{m})=
\sum_{j=1}^{m}  T_{ b_j}^j(f_{1},\dots,f_{m} ) ,      
\end{equation}
where each term is the commutator of $b_j$ and $T$ in the $j$-th
entry of $T$, that is,
$$
T_{ b_j}^j({\rm \bf f}) \equiv  T_{ b_j}^j(f_{1},\dots,f_{m})\equiv
[b_j,T]_j (f_1,\dots,f_m) $$$$\equiv
b_{j}T(f_{1},\dots,f_{j},\dots,f_{m}) -
T(f_{1},\dots,b_{j}f_{j},\dots,f_{m}).
$$
It was shown in \cite{PT} that $\Tsigma$ satisfies the bounds
\eqref{standardbound} for all indices satisfying
\eqref{holderrelation} with $p>1$. The result was extended  in
\cite{LOPTT} to all $p>1/m$. The estimates are of the form
\begin{equation}\label{commutatorbound}
\|\Tsigma({\rm \bf f})\|_{L^p} \lesssim \left( \sum_{j=1}^m
\|b_j\|_{BMO} \right) \, \prod_{j=1}^m \|f_j\|_{L^{p_j}}
\end{equation}
Moreover,  weighted-$L^p$ versions of the bounds
\eqref{standardbound} were obtained in \cite{LOPTT}  for  weights in
the classes $A_{\vec P}$ (see again the next section for
definitions). These classes of weights introduced in  \cite{LOPTT}
are the largest classes of weights for which all $m$-linear \cz
operators are bounded.

As it may be expected from the situation in the linear case, the
end-point estimate  \eqref{weaktype} does not hold for $\Tsigma$.
Instead the following estimate was also obtained in \cite{LOPTT}
\begin{equation}\label{weaktypetsigma}
\left| \big \{x \in \RN\,:\, |\Tsigma({\rm \bf f})(x)|>t^m \big \}
\right| \leq C({\rm \bf b })\, \prod_{j=1}^m \left(\int_{\RN} \Phi
\left( \dfrac{|f_j(x)|}{t} \right)\, dx\right)^{1/m},
\end{equation}
where  $\Phi(t)= t\,(1+\log^+t)$. The result is still true if the
Lebesgue measured is changed by an $A_{\vec 1}$ weight  \eqref{multiap}.  Note that for $m=1$ this is the
end-point result in \cite{P1}. The estimate \eqref{weaktypetsigma}
is sharp in an appropriate sense. It is also the right one from the
point of view interpolation as recently shown by Grafakos et al
\cite{GLPT}.

The results for  $\Tsigma$ were obtained in \cite{LOPTT} via
corresponding ones for the maximal function
$$  {\mathcal M}_{\Sigma\,L(\log L)} = \sum {\mathcal M}^i_{L(\log L)}, $$
where
$${\mathcal M}^i_{L(\log L)}({\rm \bf f})(x)=\sup_{Q\ni x}  \|f_i\|_{L(\log L),Q} \prod_{j\neq i} \frac{1}{|Q|} \int _{Q} f_j \, dx.
$$
Independently, Tang \cite{tang} has also looked at $\Tsigma$,
iterations of it, and vector valued versions, but only for weights
in the classical $A_p$ classes (whose product is still smaller than
$A_{\vec P}$). He obtained some end-point estimates but with the
right-hand side term in \eqref{weaktypetsigma} replaced by a more
complicated expression with an extra factor, and without the
homogeneity of  \eqref{weaktypetsigma}, which is crucial to obtain
optimality. \vspace{2mm}

We will establish in this article strong bounds for iterated
commutators for $p>1/m$ allowing the full $A_{\vec P}$ classes and
again sharp end-point results when $p=1/m$
%

For a \cz operator $T$ and ${\rm \bf b} = (b_1, \dots, b_m)$ in
$BMO^m$,
 we define the iterated commutators $\Tpi$ to be
\begin{equation}\label{iterado}
\Tpi \,(f_{1},\dots,f_{m}) \equiv
[b_1, [b_2, \dots [b_{m-1},[b_m, T]_m]_{m-1}\dots ]_2]_1({\rm \bf f}).      
\end{equation}

To clarify the notation, if $T$ is associated in the usual way with
a \cz kernel $K$,  then at a  formal level
\begin{equation}\label{formalmente}
\Tpi({\rm \bf f}) (x)= \int_{(\RN)^m} \prod_{j=1}^m
(b_j(x)-b_j(y_j)) \, K(x,y_1,\dots, y_m)f_1(y_1)\dots f_m(y_m)\,
dy_1\dots dy_m.
\end{equation}
%
(See also \eqref{explicito} below for another explicit formula in
the bilinear case.)

We will prove the following strong type bound for $\Tpi$.


%


\begin{theorem}\label{thmStrong}
Let $T$ be an $m$-linear \cz operator;  $\vec w \in A_{\vec P}$ with
$$\frac{1}{p}=\frac{1}{p_1}+\dots+\frac{1}{p_m}$$ and $1<
p_j<\infty,j=1,\dots,m$; and  ${\rm \bf b }\in BMO^m$. Then, there
exists a constant $C$ such that
\begin{equation}\label{daproducto}
\| \Tpi ({\rm \bf f})\|_{L^{p}(\nu_{\vec w})}\le C\,  \prod_{j=1}^m
\|b_j\|_{BMO} \,\prod_{j=1}^m\|f_j\|_{L^{p_j}(w_j)}.
\end{equation}
\end{theorem}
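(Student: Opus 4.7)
The plan is to follow the sharp maximal function approach of \cite{PT, LOPTT}, pushed to the iterated setting. The proof will combine three ingredients: a pointwise control of $\Tpi(\vec f)$ in terms of a multilinear Orlicz maximal function together with lower-order iterated commutators, the weighted Fefferman-Stein inequality for $A_{\infty}$ weights, and the boundedness of the corresponding Orlicz maximal operator on the classes $A_{\vec P}$.

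\textbf{Step 1: pointwise sharp maximal estimate.} For $0<\delta<\varepsilon<1/m$ I would prove a pointwise inequality of the form
\begin{multline*}
M^{\#}_{\delta}\bigl(\Tpi(\vec f)\bigr)(x) \lesssim \Bigl(\prod_{j=1}^{m}\|b_j\|_{BMO}\Bigr)\bigl(\mathcal{M}_{L(\log L)}^{\Pi}(\vec f)(x) + M_{\varepsilon}(T\vec f)(x)\bigr) \\
+ \sum_{\emptyset\neq\sigma\subsetneq\{1,\dots,m\}}\Bigl(\prod_{j\notin\sigma}\|b_j\|_{BMO}\Bigr)\, M_{\varepsilon}\bigl(T_{\Pi\mathbf{b}_{\sigma}}(\vec f)\bigr)(x),
\end{multline*}
where $\mathcal{M}_{L(\log L)}^{\Pi}(\vec f)(x):=\sup_{Q\ni x}\prod_{j=1}^{m}\|f_j\|_{L(\log L),Q}$, $M_{\varepsilon}g=(M(|g|^{\varepsilon}))^{1/\varepsilon}$, and $T_{\Pi\mathbf{b}_{\sigma}}$ denotes the iterated commutator of $T$ taken only over the entries indexed by $\sigma$. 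The argument begins by fixing a cube $Q\ni x$, subtracting from $\Tpi(\vec f)$ a suitable constant $c_{Q}$, and writing $b_j(z)-b_j(y_j)=(b_j(z)-\lambda_j)-(b_j(y_j)-\lambda_j)$ with $\lambda_j=(b_j)_{Q^{*}}$ for a fixed dilate $Q^{*}$ of $Q$. Expanding the product $\prod_j(b_j(z)-b_j(y_j))$ produces $2^{m}$ terms indexed by subsets $\sigma\subseteq\{1,\dots,m\}$. The full-index term gives $\prod_j(b_j(z)-\lambda_j)\,T(\vec f)(z)$, whose $L^{\delta}(Q)$ average is handled by Kolmogorov's inequality and the John-Nirenberg inequality, yielding the contribution $M_{\varepsilon}(T\vec f)$. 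The empty-index term is split via $f_j=f_j\chi_{Q^{*}}+f_j\chi_{(Q^{*})^c}$: the local part is controlled using the weak $L^{1/m}$ bound \eqref{weaktype} followed by the generalized Hölder inequality in Orlicz spaces, exploiting the equivalence $\|(b_j-\lambda_j)g\|_{L^{1},Q^{*}}\lesssim\|b_j\|_{BMO}\|g\|_{L(\log L),Q^{*}}$, which produces the factor $\mathcal{M}_{L(\log L)}^{\Pi}(\vec f)$; the global part is handled by standard Calderón-Zygmund kernel smoothness together with the same Orlicz Hölder estimate. The mixed terms reproduce, after absorbing the $(b_j(z)-\lambda_j)$ factors via John-Nirenberg, lower-order iterated commutators $T_{\Pi\mathbf{b}_{\sigma}}$ controlled by $M_{\varepsilon}$.

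\textbf{Step 2: induction and Fefferman-Stein.} The theorem is proved by induction on the number $m$ of $BMO$ symbols. The base case reduces to the weighted bound for $T$ itself on $A_{\vec P}$ from \cite{LOPTT}, and the single-symbol case is covered by the analogous bound for $\Tsigma$ established there. For the inductive step, apply the weighted Fefferman-Stein inequality $\|f\|_{L^{p}(\nu_{\vec w})}\le C\|M^{\#}_{\delta}f\|_{L^{p}(\nu_{\vec w})}$, which is valid because $\nu_{\vec w}\in A_{\infty}$ as shown in \cite{LOPTT}, to $f=\Tpi(\vec f)$ and insert the pointwise inequality of Step 1. Each term on the right-hand side is then controlled: the lower-order iterated commutators by the inductive hypothesis, the term $M_{\varepsilon}(T\vec f)$ by the strong $L^{p}(\nu_{\vec w})$ bound for $T$ from \cite{LOPTT} combined with the $A_{\infty}$ boundedness of $M_{\varepsilon}$, and the Orlicz maximal function $\mathcal{M}_{L(\log L)}^{\Pi}$ by a product variant of the $A_{\vec P}$ multilinear maximal theorem of \cite{LOPTT}, absorbing the logarithms using the openness of the $A_{\vec P}$ condition and the equivalence $\|f\|_{L(\log L),Q}\lesssim\|f\|_{L^{r},Q}$ for $r>1$ sufficiently close to $1$.

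\textbf{Main obstacle.} The technical heart of the argument is Step 1: the correct constant $c_{Q}$ must be chosen and the $2^{m}$ expansion terms carefully accounted for, so that the oscillation produces precisely the Orlicz maximal function $\mathcal{M}_{L(\log L)}^{\Pi}$ with exactly one logarithm per entry, rather than a larger object such as $\mathcal{M}_{L(\log L)^{m}}$ which would fail to be bounded on $A_{\vec P}$ weights. A secondary issue is verifying the $L^{p_1}(w_1)\times\cdots\times L^{p_m}(w_m)\to L^{p}(\nu_{\vec w})$ boundedness of $\mathcal{M}_{L(\log L)}^{\Pi}$ with genuinely multilinear weights, which requires more than a product of the scalar $A_p$ statements.
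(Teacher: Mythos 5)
Your proposal takes essentially the same route as the paper: the same pointwise sharp maximal estimate bounding $M^{\#}_{\delta}(\Tpi\vec f)$ by $\prod_j\|b_j\|_{BMO}\,({\mathcal M}_{L(\log L)}\vec f + M_{\varepsilon}(T\vec f))$ plus $M_{\varepsilon}$ applied to lower-order iterated commutators (paper's Theorem~\ref{lemasharpconmuta}), the same iterated Fefferman--Stein argument yielding a Coifman--Fefferman type bound against ${\mathcal M}_{L(\log L)}$ (paper's Theorem~\ref{CommThm}), and the same final appeal to the $A_{\vec P}$ strong boundedness of ${\mathcal M}_{L(\log L)}$ from \cite{LOPTT}. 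The only cosmetic differences are that you frame the bootstrap as an explicit induction on the number of $BMO$ symbols and work directly in $L^{p}(\nu_{\vec w})$, whereas the paper first establishes the estimate for a general $A_{\infty}$ weight and then specializes to $\nu_{\vec w}$.
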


%


%


%

At the end-point we obtain the following estimate.

\begin{theorem}\label{commutatorthmllogl}
Let $T$ be an $m$-linear \cz operator;  $\vec w \in A_{\vec 1}$, and
${\rm \bf b } \in BMO^m$. Then,  there exists a constant $C$
depending on ${\rm \bf b }$ such that
\begin{equation}\label{debilconmuta}
\nu_{\vec w} \left( \big \{x \in \RN\,:\, |\Tpi ({\rm \bf
f})(x)|>t^m \big \} \right) \leq C\, \prod_{j=1}^m \left(\int_{\RN}
\Phi^{(m)} \left(\dfrac{|f_j(x)|}{t} \right)\,
w_j(x)dx\right)^{1/m},
\end{equation}
where $\Phi(t)= t\,(1+\log^+t)$  and
$\Phi^{(m)}=\overbrace{\Phi\circ\cdots\circ\Phi}^{m} $.

\smallskip

Moreover, the estimate is sharp in the sense that $\Phi^{(m)}$ can
not be replaced by $\Phi^{(k)}$ for $k<m$.
\end{theorem}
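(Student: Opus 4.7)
The strategy is to adapt to the iterated setting the end-point scheme of P\'erez \cite{P1} and its multilinear version from \cite{LOPTT}, tracking the extra logarithmic loss produced by each of the $m$ BMO factors in \eqref{formalmente}. By the $m$-homogeneity in $t$ we may normalize $t=1$, and by rescaling $b_j$ we may assume $\|b_j\|_{BMO}=1$ at the cost of absorbing constants into $C=C({\rm \bf b})$. The first step is, for each $j$, a weighted Calder\'on-Zygmund decomposition of $f_j$ at height $1$ producing $f_j = g_j + h_j$ with $h_j = \sum_k h_{j,k}$, $\operatorname{supp} h_{j,k}\subset Q_{j,k}$, $\int h_{j,k}=0$, $|g_j|\lesssim 1$ and
$$\sum_k w_j(Q_{j,k}) \,\lesssim\, \int_{\RN} \Phi^{(m)}(|f_j|)\, w_j\, dx.$$
This is the point where $\Phi^{(m)}$ enters the picture.

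Multilinearity expands $\Tpi({\rm \bf f}) = \sum_{\epsilon \in \{0,1\}^m} \Tpi(f_1^{\epsilon_1},\dots,f_m^{\epsilon_m})$ with $f_j^0 = g_j$ and $f_j^1 = h_j$. The all-good term $(\epsilon = 0)$ is handled by Chebyshev and Theorem \ref{thmStrong} at some convenient exponent tuple $(p_1,\dots,p_m)$ with $p>1/m$, using $|g_j|\lesssim 1$ to reduce $\|g_j\|_{L^{p_j}(w_j)}$ to $\int |f_j|\,w_j \lesssim \int \Phi^{(m)}(|f_j|)\,w_j$. For the remaining $2^m-1$ terms, one uses the integral representation \eqref{formalmente} together with the algebraic expansion
\begin{equation*}
\prod_{j=1}^m (b_j(x)-b_j(y_j)) \;=\; \sum_{S\subset\{1,\dots,m\}}\; \prod_{j\in S}(b_j(x)-\lambda_j)\, \prod_{j\notin S}(\lambda_j-b_j(y_j)),
\end{equation*}
where $\lambda_j = (b_j)_{Q_{j,k_j}}$ whenever $y_j$ lies in a bad cube $Q_{j,k_j}$. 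Outside a suitable dilate of the union of the bad cubes, the vanishing moment $\int h_{j,k}=0$ combined with H\"ormander regularity of the kernel gives decay; the factors $b_j(x)-\lambda_j$ are then absorbed by the John-Nirenberg inequality paired with the $L(\log L)$-norm of $h_{j,k}$ through the Orlicz duality $L(\log L)$-$\exp L$. Each such pairing costs exactly one composition of $\Phi$.

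The main obstacle is the interaction between the $m$ different families of bad cubes, which need not be nested. As in \cite{LOPTT}, this is handled by fixing the largest bad cube as a reference and reducing to a weighted weak-type bound, valid for $\vec w\in A_{\vec 1}$, for the iterated maximal operator
$$\mathcal{M}_{\Pi,L(\log L)}(\vec f)(x) \;=\; \sup_{Q\ni x}\prod_{j=1}^m \|f_j\|_{L(\log L),Q},$$
each application of which absorbs one extra $\Phi$ on the corresponding entry. The contribution inside the enlarged bad cubes is directly bounded by $\sum_{j,k}\nu_{\vec w}((Q_{j,k})^*)$, which is again controlled by the right-hand side of \eqref{debilconmuta} thanks to $\vec w\in A_{\vec 1}$. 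Summing over all $2^m$ expansion terms and invoking the CZ level-set estimate above yields \eqref{debilconmuta}.

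Finally, sharpness is established by an explicit construction generalizing the scalar example of \cite{P1}: taking $b_j(x) = \log|x| \in BMO$, $f_j=\chi_{B(0,1)}$, $T$ an $m$-linear Riesz-type transform and $\vec w\equiv 1$, a direct computation lower-bounds $|\Tpi({\rm \bf f})(x)|$ by an iterated logarithm of order exactly $m$ near the origin, so that replacing $\Phi^{(m)}$ by $\Phi^{(k)}$ with $k<m$ would force the ratio of the two sides of \eqref{debilconmuta} to diverge.
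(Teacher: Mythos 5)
Your proposal attempts a direct Calderón--Zygmund decomposition in the spirit of the linear endpoint proof of P\'erez, rather than the route the paper actually takes: the paper first proves a pointwise sharp-maximal-function estimate for $\Tpi$ in terms of $\mathcal{M}_{L(\log L)}$ and lower-order commutators (Theorem~\ref{lemasharpconmuta}), transfers it to a weak Coifman--Fefferman inequality~\eqref{weaknorm} via the Fefferman--Stein inequality, and then proves a separate weak-type endpoint estimate with $\Phi^{(m)}$ for the maximal operator $\mathcal{M}_{L(\log L)}$ (Theorem~\ref{maximalthmllogl}) by a covering argument, finally combining the two. These are genuinely different strategies, and yours is not unreasonable in outline, but as written it has gaps that cannot be waved away.

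First, the claim that the CZ decomposition at height $1$ ``is the point where $\Phi^{(m)}$ enters the picture'' is misleading: a weighted CZ decomposition at height $1$ produces $\sum_k w_j(Q_{j,k}) \lesssim \int |f_j| w_j$, which is indeed bounded by $\int \Phi^{(m)}(|f_j|) w_j$ since $\Phi^{(m)}(t)\geq t$, but this is a trivial majorization and does not reflect the mechanism by which the $m$-fold iterate actually appears. The iteration comes from the interaction of $m$ Orlicz averages in the supremum defining $\mathcal{M}_{L(\log L)}$, not from the stopping-time construction on each $f_j$ separately. This is precisely what the covering argument in the paper's Theorem~\ref{maximalthmllogl} handles, via the combinatorics over subsets $\sigma\in C_h^m$ and the submultiplicativity of $\Phi$.

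Second, and more seriously, you invoke a ``weighted weak-type bound, valid for $\vec w\in A_{\vec 1}$, for the iterated maximal operator $\mathcal{M}_{\Pi,L(\log L)}$'' (your notation for the paper's $\mathcal{M}_{L(\log L)}$) as a black box, while your whole plan is meant to prove the commutator endpoint result; but that weak-type bound with the $\Phi^{(m)}$ loss is itself an entirely new statement (the paper's Theorem~\ref{maximalthmllogl}), requires a genuine proof, and is the real crux of the matter. Reducing $\Tpi$ to $\mathcal{M}_{L(\log L)}$ without establishing the endpoint behavior of the latter leaves the argument with its most essential piece missing. Also, the treatment of $m$ independent families of bad cubes, ``fixing the largest bad cube as a reference,'' is a substantially harder combinatorial step in the $\Pi$-commutator setting than the single hand-wave suggests. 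Your sharpness construction ($b_j$ logarithmic, $f_j$ an indicator, and a homogeneity/contradiction argument) is essentially the same as the paper's, with only minor cosmetic differences.
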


\vspace{.5cm} \noindent To prove the sharpness of theorem above we
adapt some ideas from \cite{LOPTT}. For simplicity, we consider
$m=2$, $n=1$,  T one of the bilinear operators for $n=1$ obtained from the (linear) Riesz transforms in $n=2$, as it is done for example in \cite{LOPTT}, and the functions
$b_1(x)=b_2(x)=\log|1+x|$ and $f_1=f_2=\chi_{(0,1)}$. We can
prove, for example,  that the estimate
\begin{equation*}
 \left| \big \{x \in \RN\,:\, |\Tpi ({\rm \bf f})(x)|>t^2 \big \}
\right| \leq C\,  \left(\int_{\RN} \Phi \left(\dfrac{|f_1|}{t}
\right)\right)^{1/2} \left(\int_{\RN} \Phi^{(2)}
\left(\dfrac{|f_2|}{t} \right)\right)^{1/2}
\end{equation*}
is false. In fact, if the inequality above were to hold, by the
homogeneity we would have that
\begin{equation*}
 \left| \big \{x \in \RN\,:\, |\Tpi ({\rm \bf f})(x)|>t^2 \big \}
\right|^2 \leq C\,  \int_{\RN} \Phi \left(\dfrac{|f_1|}{t^2} \right)
\int_{\RN} \Phi^{(2)} (f_2),
\end{equation*}
and hence, since $\Phi$ is a Young function
\begin{equation*}
\sup_{\lambda>0} \frac{1}{\Phi (1/\lambda)} \left| \big \{x \in
\RN\,:\, |\Tpi ({\rm \bf f})(x)|>\lambda \big \} \right|^2 <\infty.
\end{equation*}
However, using the fact that $\Phi^{-1}(x)\cong x/(\log x)$ for
$x>e$, it is easy to check that
\begin{eqnarray*}
&&\sup_{\lambda>0} \frac{1}{\Phi (1/\lambda)} \left| \big \{x \in
\RN\,:\, |\Tpi ({\rm \bf f})(x)|>\lambda \big \} \right|^2 \\
&&\leq \sup_{\lambda>0} \frac{1}{\Phi (1/\lambda)} | \big \{x>
e\,:\, \frac{\log^2 (1+x)}{x^2}>\lambda \big \} |^2\\
&&=\sup_{\lambda>0} \frac{1}{\Phi (1/\lambda^2)} | \big \{x>
e\,:\, \frac{\log (x)}{x}>\lambda \big \} |^2\\
&&\ge \sup_{\lambda>0} \frac{1}{\Phi (1/\lambda^2)} | \big \{x>
e\,:\, \frac{C}{\Phi^{-1}(x)}>\lambda \big \} |^2\\
&&\ge C\sup_{0<\lambda< C/e} \frac{(\Phi(C/\lambda)-e)^2}{\Phi
((C/\lambda)^2)} \\
&&\ge \frac{C}{4}\sup_{0<\lambda< C/2e}
\frac{(\Phi(C/\lambda))^2}{\Phi
((C/\lambda)^2)} \\
&&\ge \frac{C}{4}\sup_{0<\lambda< C/2e}\log(C/\lambda)=\infty.
\end{eqnarray*}

\vspace{0.5cm}

As in the linear case and the particular multilinear case studied in
\cite{LOPTT}, the proofs of the two main theorems will be based on
corresponding estimates on a maximal function that controls the
commutator, the operator $\mMlog$ given by
\begin{equation}\label{culyllogl}
{\mathcal M}_{L(\log L)}({\rm \bf f})(x)=\sup_{Q\ni x}\prod_{j=1}^m
\|f_j\|_{L(\log L),Q},
\end{equation}
where the supremum is taken over all cubes $Q$ containing $x$.
Strong bounds for this operator were already obtained in
\cite{LOPTT} but not weak-type ones.  We present in this article the
right end-point distributional estimate it satisfies (see
Theorem~\ref{maximalthmllogl}). This operator  and the estimates it
satisfies are crucial in this paper.
%

Our analysis will show that in fact  one can also study commutators
where only $k<m$ factors appear in \eqref{formalmente}, and  which
are controlled by an appropriate modification of the maximal
function $\mMlog$. We will concentrate only  in the case where there
are $m$ functions in $BMO$, which is the most difficult one, and
leave other generalizations to the interested reader.  See, however Section 3 below.  \vspace{2mm}

The next section contains some basic definitions and further
background related to the classes $A_{\vec P}$  of vector weights
and several multilinear maximal functions from \cite{LOPTT}.
Nevertheless, the reader already familiar with the subject can skip
Section~\ref{preliminares} and move directly to
Section~\ref{sectiontres},  where a key  pointwise estimate
involving the maximal function $\mMlog$, Theorem~
\ref{lemasharpconmuta}, is combined with the classical
Fefferman-Stein inequality to prove the strong bounds in
Theorem~\ref{thmStrong}. Likewise, the proof of
Theorem~\ref{commutatorthmllogl}  is obtained using  a new weak type
estimate for the maximal function $\mMlog$,
Theorem~\ref{maximalthmllogl},  which is presented  in
Section~\ref{seccioncuatro}. \vspace{2mm}

Before we conclude this introduction, we would like to  consider
analogs of \eqref{better} in the multilinear setting in view of
\eqref{daproducto} and put in evidence again some {\it better than
expected estimates}, which are implied by the commutator results and
which also motivate in part our study of commutators. For simplicity
we consider the following particular case.  For a bilinear \cz
operator $T$,  we can write
%
%
\begin{equation}\label{explicito}
\Tpi \, (f_1,f_2) =  b_1b_2 T(f_1,f_2) - b_2T(b_1f_1,f_2)-
b_1T(f_1,b_2f_2) + T(b_1f_1,b_2f_2).
\end{equation}
We can use duality to obtain the surprising quad-linear estimate

\begin{align}
\| hb_2 T(f_1,f_2) - & f_1T^{*1}(h b_2,f_2)-  hT(f_1,b_2f_2) + f_1T^{*1}(h,b_2f_2)\|_{H^1} \nonumber \\
 &\lesssim \|b_2\|_{BMO} \|h\|_{L^{p'}}\|f_1\|_{L^{q}}\|f_2\|_{L^{r}}, \label{multilinearbetter}
\end{align}
%
for $1/q +1/r =1/p$, $1<p,q,r<\infty$, and where $T^{*1}$ is the
transpose of $T$ in the first variable.  Notice that this is again
an improvement (now both in the target and the range) over the
trivial estimate
$$
 S: L^\infty(\RN)  \times L^{p'} (\RN) \times L^q (\RN) \times L^r (\RN) \to L^1(\RN),
$$
where
$$
S(b,h,f_1,f_2)=hb T(f_1,f_2) - f_1T^{*1}(h b,f_2)-  hT(f_1,bf_2) +
f_1T^{*1}(h,bf_2),
$$
and which follows by H\"older's inequality and the boundedness of
$T$. The better estimate obtained reflects again the presence of
certain hidden cancellations. Though we will not carry their study
here any further, it would be interested to see if estimates like
\eqref{multilinearbetter} are amenable to some analysis similar to
the one generated in the linear case as consequence of
\eqref{better}.

{\bf Acknowledgement.}  We would like to thank Hua Wang for pointing out some typos and a gap in a previous version of this manuscript which we have now corrected and filled in.

\section{Some background definitions and estimates}  \label{preliminares}

\subsection{\cz operators} Following \cite{GT4} we will assume here  that $T$ is a bounded $m$-linear \cz operator. That is,
$T$ satifies the bounds \eqref{standardbound} and \eqref{weaktype}
and its Schwartz kernel $K$
 satisfies away from
 the diagonal $x=y_1=\dots=y_m$ in $({\mathbb
R}^n)^{m+1}$,
\begin{equation}\label{cond1} |K(y_0,y_1,\dots,y_m)|\le \frac{A}{
\Bigl(\sum\limits_{k,l=0}^m|y_k-y_l|\Bigr)^{mn}}
\end{equation}
and also
\begin{equation}\label{cond2}
|K(y_0,\dots,y_j,\dots,y_m)-K(y_0,\dots,y_j',\dots,y_m)|
\le\frac{A|y_j-y_j'|^{\e}}{\Bigl(
\sum\limits_{k,l=0}^m|y_k-y_l|\Bigr)^{mn+\e}},
\end{equation}
for some $\e>0$ and all $0\le j\le m$, whenever $|y_j-y_j'|\le
\frac{1}{2}\max_{0\le k\le m}|y_j-y_k|$. In particular for $x \notin
\cap \, \supp f_j$,
$$
T(f_1,\dots,f_m)(x)= \int K(x, y_1,\dots,y_m)f_1(y_1)\dots
f_m(y_m)\,dy_1\dots dy_m.
$$

\bigskip

\subsection{Orlicz norms}\label{orlicz}

%
For  $\Phi(t)= t\,(1+\log^+t)$ and a cube $Q$ in $\RN$  we will
consider the average $\|f\|_{\Phi,Q}$ of a function $f$ given by the
Luxemburg norm
$$
\|f\|_{L\log L, Q}=
\inf \{\la>0\,:\, \frac{1}{|Q|}\int_Q\Phi \left(\frac{|f(x)|}{\la
}\right )dx\leq 1\}.
 $$

We will need the several basic estimates from the theory of Orlicz
spaces. We first recall that
\begin{equation}\label{biggerone}
 \|f\|_{\P, Q} >1 \quad \mbox{if and only if} \quad
\frac{1}{|Q|}\int_Q\P \left(|f(x)|\right )dx > 1.
\end{equation}

Next, we note that the generalized H\"older inequality in Orlicz spaces together with the John-Nirenberg inequality implies that%
\begin{equation}\label{John-Nirenberg-LLogL}
\frac{1}{|Q|}\int_Q \left |b(y)-b_Q \right |f(y)\,dy\leq
C\|b\|_{BMO}\|f\|_{L(\log L),Q},
\end{equation}
an estimate that we shall use in several occasions without further
comment.

We will also use the maximal function
$$ 
M_{L(\log L)}f(x) = \sup_{Q\ni x}\|f\|_{L(\log L), Q},$$
where the supremum is taken over all the cubes containing $x$.
This operator satisfies the pointwise equivalence%
\begin{equation}\label{iteration}
M_{L(\log L)}f(x) \approx M^{2}f(x),
\end{equation}
where $M$ is the Hardy-Littlewood maximal function, and we will also
employ several times  the Kolmogorov inequality
\begin{equation}\label{kolmogorov}
\|f\|_{L^p(Q, \frac{dx}{|Q|})}\leq C\, \|f\|_{L^{q,\infty}(Q,
\frac{dx}{|Q|})},
\end{equation}
for $0<p<q<\infty$. See, e.g. \cite{W} and the reference in
\cite{LOPTT}.

\subsection{Sharp maximal functions}\label{sharp}

For $\delta>0$, $M_\delta$ is the maximal function
$$
M_\delta f(x)=M(|f|^\delta)^{1/\delta}(x)=\left (\sup_{Q\ni
x}\frac{1}{ |Q|}\int_Q |f(y)|^\delta \, dy \right )^{1/\delta}.
$$
In addition,  $M^\#$ is the sharp maximal function  of Fefferman and
Stein \cite{FS2},
$$M^\#(f)(x)=\sup_{Q\ni x}\inf_c \frac{1}{|Q|}\int_Q |f(y)-c|\, dy
\approx \sup_{Q\ni x}\frac{1}{|Q|}\int_Q |f(y)-f_Q|\, dy.
$$
%
and
$$
M^\#_\delta f(x)=M^\#(|f|^\delta)^{1/\delta}(x)
$$
We will also use from \cite{FS2}, the inequality
%
\begin{equation}\label{FSppstrong}
\int_{\mathbb R^n} (M_\d f(x))^p \, w(x)dx\leq C\,\int_{\mathbb R^n}
(M^{\#}_\d f(x))^p \, w(x)dx,
\end{equation}
for all function $f$ for which the left-hand side is finite,
and where  $0<p,\d<\infty$ and $w$ is a weight in $A_{\infty}$. 
%
 Moreover,
if  $\f :(0,\infty) \rightarrow (0,\infty)$ is doubling, then there
exists a constant $c$ (depending on the $A_{\infty}$ constant of $w$
and the doubling condition of $\f$) such that
\begin{equation} \label{FSweak}
\sup_{ \lambda >0} \f(\la) \, w( \{ y\in\RN: M_{\delta}f(y) < \la
\} ) \le c\, \sup_{ \la >0} \f(\la) \, w( \{ y\in \RN:
M^{\#}_{\delta}f(y) < \la  \} ),
\end{equation}
again for every function $f$ such that the left hand side is finite.
%


%

%


%

\subsection{Multiple weights}
Following the notation in \cite{LOPTT}, for $m$ exponents
$p_{1},\dots, p_{m}$, we will often write $p$ for the number given
by $\frac{1}{p}=\frac{1}{p_1}+\dots+\frac{1}{p_m}$, and $\vec P$ for
the vector $\vec P = (p_{1},\dots, p_{m})$.

Let  $1\le p_1, \dots , p_{m}<\infty$,  a multiple weight
$\vec w=(w_1,\dots,w_m)$,  is said to satisfy the {\it multilinear
$A_{\vec p}$ condition} if for
$$\nu_{\vec w}=\prod_{j=1}^mw_j^{p/p_j}.$$
it holds that
\begin{equation}\label{multiap}
\sup_{Q}\Big(\frac{1}{|Q|}\int_Q\nu_{\vec
w}\Big)^{1/p}\prod_{j=1}^m\Big(\frac{1}{|Q|}\int_Q
w_j^{1-p'_j}\Big)^{1/p'_j}<\infty.
\end{equation}
When $p_j=1$, $\Big(\frac{1}{|Q|}\int_Q w_j^{1-p'_j}\Big)^{1/p'_j}$
is understood as $\displaystyle(\inf_Qw_j)^{-1}$.

One can check that $A_{(1,\dots,1)}$ is contained in $A_{\vec P}$
for each $\vec P$, however the classes $A_{\vec P}$ are not
increasing with the natural partial order. As mentioned in the
introduction, these are the largest classes of weights for which the
multilinear \cz operators are bounded on Lebesgue spaces, as proved
in \cite{LOPTT}) improving on the results in \cite{GT5} and
\cite{PT}.  In fact,  one has
%
%
$$ \prod_{j=1}^m A_{p_j} \subset A_{\vec P},$$
with strict containment. Moreover, in general  $\vec w\in  A_{\vec
P}$ does not imply $w_j\in L^1_{\text{loc}}$ for any~$j$,
 but instead
%
%
%
%
%
%
%
%
%
\begin{equation}\label{cases}
\vec w \in A_{\vec P} \iff
\begin{cases}
w_j^{1-p'_j}\in A_{mp'_j},\,\,j=1,\dots,m
\\
\nu_{\vec w}\in A_{mp},
\end{cases}
\end{equation}
where the condition $w_j^{1-p'_j}\in A_{mp'_j}$ in the case $p_j=1$
is understood as $w_j^{1/m}\in A_1$.

Observe that in the linear case ($m=1$) both conditions included in
(\ref{cases}) represent the same $A_p$ condition. However, when
$m\ge 2$ neither of the conditions in (\ref{cases}) implies the
other. We refer the reader to \cite{LOPTT} for more details on this
multilinear weights.


\section{Proof of Theorem \ref{thmStrong}} \label{sectiontres}

The proof of the Theorem \ref{thmStrong} will rely on a pointwise estimate using sharp maximal functions.
The  technique of  comparing commutators with sharp maximal
operators has by now a long history of successful applications (see
the comments in \cite{LOPTT} p.15 and the references therein).

To state the pointwise result in great generality we need to introduce some additional for $m$-linear iterated commutators involving $j $ $BMO$ functions  with $j<m$.  Following  \cite{PTru},  
for positive integers $m$ and  $j$ with $1\leq j\leq m$,  we denote by $C_j^m$ the family of
all finite subsets $\sigma=\{\sigma(1),\dots,\sigma(j)\}$ of $\{1,\dots,m\}$ of $j$ different elements, where we always take $\sigma(k) < \sigma(l)$ if $k<l$. For any $\sigma \in C_j^m$, we associate the complementary sequence $\sigma' \in C^{m}_{m-j}$ given by $\sigma'=\{1,\dots,m\} \backslash\sigma$ with the convention 
 $C^{m}_{0}=\emptyset$. Given an $m$-tuple of functions {\rm {\bf b}} and  $\sigma \in C_j^m$, we also use the notation ${\rm {\bf b}}_{\sigma}$ for the $j$-tuple obtained from {\rm {\bf b}} given by $(b_{\sigma(1)}, \dots, b_{\sigma(j)})$. 

Similarly to \eqref{iterado}, we define for a \cz operator $T$,  $\sigma \in C_j^m$,  and ${\rm \bf b_\sigma} = (b_{\sigma(1)}, \dots, b_{\sigma(j)})$ in
$BMO^j$, the iterated commutator 
\begin{equation}\label{iterado2}
T_{\Pi\, {\rm {\bf b}_{\sigma}} }  \,(f_{1},\dots,f_{m})=
[b_{\sigma(1)}, [b_{\sigma(2)}, \dots [b_{\sigma({j-1})},[b_{\sigma(j)}, T]_{\sigma(j)}]_{\sigma({j-1})}\dots ]_{\sigma(2)}]_{\sigma(1)}({\rm \bf f}).      
\end{equation}
That is, formally 
\begin{equation*}
T_{\Pi\, {\rm {\bf b}_{\sigma}} } ({\rm {\bf f}})(x)=\int_{(\RN)^m} \left(\prod_{i=1}^j
(b_{\sigma(i)}(x)-b_{\sigma(i)}(y_{\sigma(i)}))\right) \, K(x,y_1,\dots, y_m)\prod_{i=1}^mf_i(y_i)\,
d\vec{y}.
\end{equation*}
Clearly $T_{\Pi\, {\rm {\bf b}_{\sigma}} }= \Tpi$ as defined before when $\sigma = \{1,2,\dots,m\}$, while 
 $T_{\Pi\, {\rm {\bf b}_{\sigma}} }= T^j_{b_j}$  when $\sigma =\{ j\}$.

The pointwise estimate that will serve our purposes  is the following.

\begin{theorem}\label{lemasharpconmuta}
Let $\Tpi$ be a multilinear commutator with
${\bf b}\in  BMO^m$ and  let
$0<\delta<\varepsilon$, with $0<\delta<1/m$. Then, there exists a
constant $C>0$, depending on $\delta$ and $\varepsilon$, such that
\begin{eqnarray}\label{lemasharpconmuta1}
M^{\#}_\delta (\Tpi\, ( {\rm \bf f}))(x)&\leq& C \prod_{j=1}^m\|
b_j\|_{BMO}\, \left ( \, {\mathcal M}_{L(\log L)}({\rm \bf f})(x)
+M_\varepsilon (T({\rm \bf f}))(x) \right )\\
&& + \quad C\sum_{j=1}^{m-1}\sum_{\sigma\in C_j^m}\prod_{i=1}^j\|b_{\sigma(i)}\|_{BMO}M_{\epsilon}(T_{\Pi\, {\rm {\bf b}_{\sigma'}} } ({\rm {\bf f}}))(x)\nonumber
\end{eqnarray}
for all $m$-tuples ${\rm \bf f}=(f_1,..,f_m)$ of bounded measurable
functions with compact support.
\end{theorem}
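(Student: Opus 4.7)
The plan is to follow the standard route of comparing $\Tpi({\rm \bf f})$ with a sharp maximal function on cubes. Fix $x\in\RN$ and an arbitrary cube $Q\ni x$; set $Q^* = 3Q$ and $\lambda_j := (b_j)_{Q^*}$ for $j=1,\dots,m$. Since $0 < \delta < 1$, the elementary inequality $\bigl||a|^\delta - |c|^\delta\bigr| \le |a-c|^\delta$ reduces the task to producing, for each such $Q$, a constant $c_Q$ for which
$$
\left(\frac{1}{|Q|}\int_Q \bigl|\Tpi({\rm \bf f})(z) - c_Q\bigr|^{\delta}\,dz\right)^{1/\delta}
$$
is dominated by the right-hand side of \eqref{lemasharpconmuta1}.

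The algebraic key is the decomposition
$$
\Tpi({\rm \bf f})(z) = \sum_{\tau \subsetneq \{1,\dots,m\}} c_\tau \prod_{j\notin\tau}\bigl(b_j(z)-\lambda_j\bigr)\,T_{\Pi\,{\rm {\bf b}_{\tau}} }({\rm \bf f})(z) + T\bigl((b_1-\lambda_1)f_1,\dots,(b_m-\lambda_m)f_m\bigr)(z),
$$
with signs $c_\tau\in\{\pm 1\}$ and the convention $T_{\Pi\,{\rm {\bf b}_{\emptyset}} }=T$. This identity is proved by induction on $m$: inserting $b_j(z)-b_j(y_j)=(b_j(z)-\lambda_j)-(b_j(y_j)-\lambda_j)$ in \eqref{formalmente} for $j=1$ gives $\Tpi({\rm \bf f})(z) = (b_1(z)-\lambda_1)T_{\Pi\,(b_2,\dots,b_m)}({\rm \bf f})(z) - T_{\Pi\,(b_2,\dots,b_m)}((b_1-\lambda_1)f_1, f_2, \dots, f_m)(z)$, and the inductive hypothesis expands each of these two $(m-1)$-fold pieces into the required form. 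Once the identity is in hand, each summand over $\tau$ is bounded on $Q$ by the generalized H\"older inequality in Orlicz spaces together with \eqref{John-Nirenberg-LLogL}, which extracts $\prod_{j\notin\tau}\|b_j\|_{BMO}$ and, since $\delta<\varepsilon$, leaves an $L^{\varepsilon/\delta}$-mean dominated by $M_\varepsilon(T_{\Pi\,{\rm {\bf b}_{\tau}} }({\rm \bf f}))(x)$. The case $\tau=\emptyset$ yields $\prod_{j=1}^m\|b_j\|_{BMO}\,M_\varepsilon(T({\rm \bf f}))(x)$, while each proper non-empty $\tau$, reparameterized as $\sigma := \tau^c\in C_j^m$ with $1\le j\le m-1$, furnishes one term of the double sum appearing in \eqref{lemasharpconmuta1}.

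The remaining piece $T((b_1-\lambda_1)f_1,\dots,(b_m-\lambda_m)f_m)(z)$ is the main obstacle and is what forces the $\mathcal{M}_{L(\log L)}$ term. I would split each $f_j = f_j^0 + f_j^\infty$ with $f_j^0 := f_j\chi_{Q^*}$, expand into $2^m$ sub-terms, and take $c_Q$ to be the sum, over all sub-terms containing at least one $f_j^\infty$, of the same integral with $z$ replaced by the center $x_Q$ of $Q$. The fully local sub-term (all $f_j^0$) is estimated via Kolmogorov's inequality \eqref{kolmogorov} together with the weak endpoint \eqref{weaktype} of $T$; the resulting product of $L^1(Q^*)$-norms is then converted by the generalized Orlicz H\"older inequality and \eqref{John-Nirenberg-LLogL} into $\prod_j\|b_j\|_{BMO}\,\|f_j\|_{L(\log L),Q^*}$, which is bounded by $\prod_{j=1}^m\|b_j\|_{BMO}\,\mathcal{M}_{L(\log L)}({\rm \bf f})(x)$. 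For each sub-term with at least one $f_j^\infty$ factor, the choice of $c_Q$ together with the H\"older regularity \eqref{cond2} of $K$ produces a telescoping geometric series over dyadic dilates of $Q^*$ whose general term is controlled by the same $L(\log L)$-Orlicz and John--Nirenberg mechanism. The delicate point is precisely the pairing at this last step: each of the $m$ factors $(b_j-\lambda_j)$ must be absorbed into a separate $L\log L$-average of the corresponding $f_j$, because leaving any $f_j$ in a bare $L^1$-average would destroy the $\mathcal{M}_{L(\log L)}$-control needed for the proof of Theorem~\ref{thmStrong}.
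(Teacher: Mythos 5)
Your proposal is correct and follows essentially the same route the paper takes (which writes out the argument only for $m=2$): the same constants $\lambda_j=(b_j)_{Q^*}$, the same decomposition of $\Tpi$ into $\prod_{j\notin\tau}(b_j-\lambda_j)T_{\Pi\,{\rm {\bf b}_{\tau}}}({\rm \bf f})$ plus the all-shifted term, the same $f_j^0/f_j^\infty$ splitting with $c_Q$ built from the non-local pieces evaluated at the center, Kolmogorov's inequality together with the weak $L^1\times\cdots\times L^1\to L^{1/m,\infty}$ endpoint for the fully local piece, and the H\"older regularity \eqref{cond2} of $K$ for the remaining ones. One small slip worth noting: for the $\tau$-summands the paper does not use Orlicz H\"older and \eqref{John-Nirenberg-LLogL}, but rather ordinary H\"older on $L^\delta(Q,dx/|Q|)$ with exponents $q_j$ chosen so each $b_j-\lambda_j$ lands in $L^{\delta q_j}$ (bounded by $\|b_j\|_{BMO}$ via John--Nirenberg) and the remaining factor $T_{\Pi\,{\rm {\bf b}_{\tau}}}({\rm \bf f})$ lands in $L^{\delta q}$ with $\delta q<\varepsilon$, hence below $M_\varepsilon$; the Orlicz mechanism is reserved for the last term. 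Your general-$m$ inductive identity is a nice tidy formalization of what the paper only displays explicitly for $m=2$.
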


\begin{proof}
The way to interpret \eqref{lemasharpconmuta1} is 
$$M^{\#}_\delta (\Tpi\, ( {\rm \bf f}))(x) \lesssim \prod_{j=1}^m\|
b_j\|_{BMO}\, \, {\mathcal M}_{L(\log L)}({\rm \bf f})(x)
+ {\rm ``lower \, order \, terms"},
$$
as it will become apparent in its application.
Given the heavy technical notation and
for simplicity in the exposition,  we  only present the  case $m=2$.
As  the reader may soon see, the general case is 
only notationally more complicated and can be obtained with a similar procedure.
Hence, we will limit our selves to establish the following version of 
\eqref{lemasharpconmuta1}.  

For  $b_1,b_2\in BMO$ we will show that
\begin{eqnarray*}\label{miguala2}
M^{\#}_\delta (\Tpi\, ( f_1,f_2))(x)\! \!  &\leq&\! \! \!C\,  \|b_1\|_{BMO} \, \|b_2\|_{BMO} \, 
\left ( \, {\mathcal M}_{L(\log L)}(f_1,f_2)(x)
+M_\varepsilon (T(f_1,f_2))(x) \right )\\
&+& \! \! \! C \left(  \|b_2\|_{BMO} M_{\epsilon} (T_{b_1}^1  (f_1,f_2))(x) 
+ \|b_1\|_{BMO}M_{\epsilon}(T_{b_2}^2  (f_1,f_2))(x)
\right ). \nonumber
\end{eqnarray*}
For any constants $\lambda_1$ and $\lambda_2$, write 
\begin{eqnarray*}
\Tpi\, ({\rm \bf f})(x)\!\!&=&\!\!
(b_1(x)-\lambda_1)(b_2(x)-\lambda_2)T(f_1,f_2)(x)-(b_1(x)-\lambda_1)T(f_1,(b_2-\lambda_2)f_2)(x) \\
&&-(b_2(x)-\lambda_2)T((b_1-\lambda_1)f_1,f_2)(x)
+T((b_1-\lambda_1)f_1,(b_2-\lambda_2)f_2)(x).\nonumber\\
\!\!&=&\!\!-(b_1(x)-\lambda_1)(b_2(x)-\lambda_2)T(f_1,f_2)(x)+(b_1(x)-\lambda_1)T^2_{b_2-\lambda_2}(f_1,f_2)(x)\\
&&+(b_2(x)-\lambda_2)T^1_{b_1-\lambda_1}(f_1,f_2)(x)+ T((b_1-\lambda_1)f_1,(b_2-\lambda_2)f_2)(x).
\end{eqnarray*}

Also, if we fix $x\in \mathbb R^n$, a cube $Q$ centered at $x$ and
a constant $c$, then since $0<\delta<1/2$, we
can estimate
\begin{eqnarray*}
\left ( \frac{1}{|Q|}\int_Q \left | \Tpi ({\rm \bf f})(z)|^\delta
-|c|^{\delta} \right |\,dz\right )^{1/\delta} &\le&
\left ( \frac{1}{ |Q|}\int_Q \left | \Tpi({\rm \bf f})(z)
-c \right |^\delta
dz\right)^{1/\delta}\nonumber \\
\end{eqnarray*}

\begin{eqnarray*}
&\leq& \left ( \frac{C}{ |Q|}\int_Q  \left |(b_1(x)-\lambda_1)(b_2(x)-\lambda_2)T(f_1,f_2)(z) \right |^\delta dz\right )^{1/\delta}\nonumber \\
&&+ \left ( \frac{C}{ |Q|}\int_Q \left |
(b_1(x)-\lambda_1)T^2_{b_2-\lambda_2}(f_1,f_2)(z) \right
|^\delta dz\right )^{1/\delta}\nonumber \\
&&+ \left ( \frac{C}{ |Q|}\int_Q \left |
(b_2(x)-\lambda_2)T^1_{b_1-\lambda_1}(f_1,f_2)(z)\right
|^\delta dz\right )^{1/\delta}\nonumber\\
&&+ \left ( \frac{C}{ |Q|}\int_Q \left |
T((b_1-\lambda_1)f_1,(b_2-\lambda_2)f_2)(z)-c \right
|^\delta dz\right )^{1/\delta}\nonumber\\
&&=I+II+III+IV.
\end{eqnarray*}

We analyze each term separately selecting appropriate constants. Let
$Q^{*}=3Q$ and let $\lambda_j=(b_j)_{Q^{*}}$ be the average of $b_j$
on $Q^{*}$, $j=1,2$. For any $1<q_1,q_2,q_3<\infty$ with
$1=1/q_1+1/q_2+1/q_3$ and $q_3<\varepsilon/\delta$ we have by
H\"older's and Jensen's inequalities,
\begin{eqnarray*}
I &\leq &C\left ( \frac{1}{|Q|}\int_{Q}  \left | b_1(z)-\lambda_1
\right |^{\delta q_1} dz\right )^{1/\delta q_1}
\left ( \frac{1}{|Q|}\int_{Q} \left | b_2(z)-\lambda_2 \right |^{\delta q_2} dz\right )^{1/\delta q_2}\nonumber \\
&&\times\left ( \frac{1}{|Q|}\int_Q \left | T(f_1,f_2)(z) \right
|^{\delta q_3}dz\right )^{1/\delta
q_3}\nonumber \\
&\leq &C\|b_1\|_{BMO}\|b_2\|_{BMO}\,M_{\delta q_3}(T(f_1,f_2))(x)
\\[2mm]
&\leq
&C\|b_1\|_{BMO}\|b_2\|_{BMO}\,M_{\varepsilon}(T(f_1,f_2))(x),\nonumber
\end{eqnarray*}
which is an appropriate estimate for what we want to obtain.

Since  $II$ and $III$ are symmetric we only study $II$. Let $1<t_1,t_2<\infty$ with
$1=1/t_1+1/t_2$ and $t_2<\varepsilon/\delta$ then, by
H\"older's and Jensen's inequalities,
\begin{eqnarray*}
II&\leq&C\left ( \frac{1}{|Q|}\int_{Q}  \left | b_1(z)-\lambda_1
\right |^{\delta t_1} dz\right )^{1/\delta t_1}\left ( \frac{1}{|Q|}\int_Q \left | T^2_{b_2-\lambda_2}(f_1,f_2)(z) \right
|^{\delta t_2}dz\right )^{1/\delta
t_2}\nonumber \\
&\leq &C\|b_1\|_{BMO}\,M_{\delta t_2}(T^2_{b_2-\lambda_2}(f_1,f_2))(x)
\\
&\leq &C\|b_1\|_{BMO}\,M_{\varepsilon}(T^2_{b_2-\lambda_2}(f_1,f_2))(x)\\
 & = &   C\|b_1\|_{BMO}\,M_{\varepsilon}(T^2_{b_2}(f_1,f_2))(x).
\end{eqnarray*}
Similarly,
\begin{eqnarray*}
III \leq C\|b_2\|_{BMO}\,M_{\varepsilon}(T^1_{b_1-\lambda_1}(f_1,f_2))(x) = 
C\|b_2\|_{BMO}\,M_{\varepsilon}(T^1_{b_1}(f_1,f_2))(x).
\end{eqnarray*}

It only remain to study the last term $IV$. We split
each $f_i$ as $f_i=f_i^0+f_i^\infty$ where $f_i^0=f\chi_{Q^*}$ and
$f_i^\infty=f_i-f_i^0$. Let
$$
 c=\sum_{j=1}^3 c_{j},
$$
where
$$c_{1}=T(f_1^0,(b_2-\lambda_2)f_2^\infty)(x),$$
$$c_{2}=T(f_1^\infty,(b_2-\lambda_2)f_2^0)(x),$$
 $$c_{3}=T(f_1^\infty,(b_2-\lambda_2)f_2^\infty)(x).$$

Then,
\begin{eqnarray*}
IV&=&\left ( \frac{C}{ |Q|}\int_Q \left |
T((b_1-\lambda_1)f_1,(b_2-\lambda_2)f_2)(z)-c \right
|^\delta dz\right )^{1/\delta} \\[2mm]
&\leq &\left ( \frac{C}{ |Q|}\int_Q \left |
T((b_1-\lambda_1)f_1^0,(b_2-\lambda_2)f_2^0)(z) \right
|^\delta dz\right )^{1/\delta} \\[2mm]
&&+\left ( \frac{C}{ |Q|}\int_Q \left |
T((b_1-\lambda_1)f_1^0,(b_2-\lambda_2)f_2^\infty)(z)-c_{1} \right
|^\delta dz\right )^{1/\delta} \\[2mm]
&&+\left ( \frac{C}{ |Q|}\int_Q \left |
T((b_1-\lambda_1)f_1^\infty,(b_2-\lambda_2)f_2^0)(z)-c_{2} \right
|^\delta dz\right )^{1/\delta} \\[2mm]
&&+\left ( \frac{C}{ |Q|}\int_Q \left |
T((b_1-\lambda_1)f_1^\infty,(b_2-\lambda_2)f_2^\infty)(z)-c_{3}
\right
|^\delta dz\right )^{1/\delta} \\[2mm]
&=&IV_1+IV_2+IV_3+IV_4
\end{eqnarray*}

We choose $1<p<1/(2\delta)$. Since $p\delta<1/2$, we can estimate $IV_1$ using H\"{o}lder's inequality and the fact that $T$ is a Calder\'on-Zygmund operator
\begin{eqnarray*}
IV_1&\leq&\left ( \frac{C}{ |Q|}\int_Q \left |
T((b_1-\lambda_1)f_1^0,(b_2-\lambda_2)f_2^0)(z) \right
|^{p\delta} dz\right )^{1/(p\delta)} \\[2mm]
&\leq &
C\|T((b_1-\lambda_1)f_1^0,(b_2-\lambda_2)f_2^0)\|_{ L^{1/2,\infty}(Q, \frac{dx}{|Q|})}\nonumber \\[2mm]
&\leq &C \frac{1}{|Q|}\int_Q \left |(b_1(z)-\lambda_1)f_1^0(z)\right |dz\frac{1}{|Q|}\int_Q \left |(b_2(z)-\lambda_2)f_2^0(z)\right |dz\nonumber \\[2mm]
&\leq &C \|b_1\|_{BMO}\, \|f_1\|_{L(\log L),Q}\|b_2\|_{BMO}\, \|f_2\|_{L(\log L),Q} \nonumber \\[2mm]
&\leq &C\|b_1\|_{BMO} \|b_2\|_{BMO}\, {\mathcal M}_{L(\log
L)}(f_1,f_2)(x).
\end{eqnarray*}

Since $IV_2$ and $IV_3$ are symmetric, we consider for example $IV_2$, and estimate
\begin{eqnarray*}
&&|T((b_1-\lambda_1)f_1^0, (b_2-\lambda_2)f_2^\infty)(z)- T((b_1-\lambda_1)f_1^0, (b_2-\lambda_2)f_2^\infty)(x)|\\
&&\le \int_{3Q}|(b_1(y_1)-\lambda_1)f_1(y_1)|dy_1
\int\limits_{{\mathbb R}^n\setminus
3Q}\frac{|x-z|^{\e}|(b_2(y_2)-\lambda_2)f_2(y_2)|dy_2}
{(|z-y_1|+|z-y_2)^{2n+\e}}\\
&&\le \int_{3Q} |b_1(y_1)-\lambda_1)f_1(y_1)|dy_1
\sum_{k=1}^{\infty}\frac{|Q|^{\e/n}}{((3^k|Q|)^{1/n})^{2n+\e}}\int\limits_{(3^{k+1}Q)}
|(b_2(y_2)-\lambda_2)f_2(y_2)|dy_2\\
&&\le
C\sum_{k=1}^{\infty}\frac{|Q|^{\e/n}}{((3^k|Q|)^{1/n})^{2n+\e}}\left
(\int_{(3^{k+1}Q)}
|b_1(y_1)-\lambda_1)f_1(y_1)|\, dy_1\right) \times\\
&&\,\,\,\,\,\, \times \left(\int_{(3^{k+1}Q)}
|b_2(y_2)-\lambda_2)f_2(y_2)|\, dy_2\right)\\
&& \leq C\sum_{k=1}^{\infty}\frac{1}{3^{\e k}}\|b_1\|_{BMO}\|b_2\|_{BMO}\|f_1\|_{L(\log L),3^{k+1}Q}\|f_2\|_{L(\log L),3^{k+1}Q}\\[2mm]
&&\leq C\|b_1\|_{BMO} \|b_2\|_{BMO}\, {\mathcal M}_{L(\log
L)}(f_1,f_2)(x).
\end{eqnarray*}

\vspace{5mm} Fiinally, the term $IV_4$ is estimated in similar way
and we deduce
%
$$|T((b_1-\lambda_1)f_1^\infty, (b_2-\lambda_2)f_2^\infty)(z)- T((b_1-\lambda_1)f_1^\infty, (b_2-\lambda_2)f_2^\infty)(x)|\leq $$
$$\leq C\|b_1\|_{BMO} \|b_2\|_{BMO}\, {\mathcal M}_{L(\log L)}(f_1,f_2)(x).$$

The proof is complete.
\end{proof}

We note that we can also obtain analogous estimates to \eqref{lemasharpconmuta1}
for $m$-linear commutators involving $j<m$ functions in $BMO$. That is estimates of the form
\begin{equation}\label{lowerorder}
M^{\#}_\delta 
(T_{\Pi\, {\rm {\bf b}_{\sigma}} } ({\rm {\bf f}}))(x)
\lesssim 
\prod_{k=1}^j\|
b_{\sigma(k)}\|_{BMO}\,  {\mathcal M}_{L(\log L)_\sigma}({\rm \bf f})(x)
+ {\rm ``lower \, order \, terms"},
\end{equation}
where ${\mathcal M}_{L(\log L)_\sigma}$ denotes the analog of  
${\mathcal M}_{L(\log L)}$ but with only $\log$ factors in the ${\rm \bf f}_{\sigma}$ functions.
 (Note  that 
${\mathcal M}_{L(\log L)_\sigma} = {\mathcal M}^j_{L(\log L)}$ 
when $\sigma= \{j\}$.)
The lower order terms are now of the form
$$
\prod_{k=1}^l\|
b_{\eta'(k)}\|_{BMO}\, M_\epsilon (T_{\Pi\, {\rm {\bf b}_{\eta}} } ({\rm {\bf f}}))(x)
$$
for $l<j$, where $\eta$ is subset of  $\sigma$ of cardinality $l$, and $\eta \cup \eta' = \sigma$. Note also that 
trivially 
\begin{equation} \label{trivial}
{\mathcal M}_{L(\log L)_\sigma}({\rm \bf f})(x) \leq {\mathcal M}_{L(\log L)}({\rm \bf f})(x).
\end{equation}

These pointwise estimates are the key for the strong and
weak-type estimates with multiple weights.  In particular, they yield an appropriate version of the
following Coifman-Fefferman type inequalities (\cite{CF}).

\begin{theorem} \label{CommThm}
Let  $0<p<\infty$,  let $w$ be a weight in $A_\infty$, and suppose that
${\rm \bf b } \in  BMO^m$. Then, there exists a constant $C_w$
(independent of ${\rm \bf b }$) and a constant $c_w({\rm \bf b })$
such that
\begin{equation}\label{strongnorm}
\int_{\mathbb R^n} |\Tpi \,({\rm \bf f})(x)|^p w(x) dx \leq C_w\,
\prod_{j=1}^m \|b_j\|_{BMO} \, \int_{\mathbb R^n} {\mathcal
M}_{L(\log L)}({\rm \bf f})(x)^p w(x)dx,
\end{equation}
and
\begin{align}
\sup_{ t >0}\frac{1}{ \Phi^m (\frac{1}{t}) }w( \{ y\in\mathbb R ^{n}
: &
|\Tpi \,({\rm \bf f})(y)| >  t^m \} )\nonumber \\
 \le & \, c_w({\rm \bf b })\, \sup_{ t
>0} \frac{1}{ \Phi^m(\frac{1}{t}) } w( \{ y\in\mathbb R ^{n}: {\mathcal M}_{L(\log L)}({\rm \bf f})(y)> t^m \}),
\label{weaknorm}
\end{align}
for all ${\rm \bf f}=(f_1,..,f_m)$ bounded with compact support.
\end{theorem}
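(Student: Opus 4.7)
The plan is to establish both \eqref{strongnorm} and \eqref{weaknorm} by induction on $j$, the number of $BMO$ entries in an iterated commutator, proving the bounds in fact for the more general family $\Tpis$ with $\sigma\in C^m_j$, $0\le j\le m$, dominated by ${\mathcal M}_{L(\log L)}$; the stated theorem is then the top case $\sigma=\{1,\dots,m\}$. Passing from ${\mathcal M}_{L(\log L)_\sigma}$ to ${\mathcal M}_{L(\log L)}$ is free by \eqref{trivial}, so there is no loss in using a single maximal function on the right-hand side throughout the induction.

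\textbf{Base case $j=0$.} Here $\Tpis=T$. The classical Coifman-Fefferman pointwise bound $M^{\#}_\delta(T({\rm \bf f}))(x)\lesssim \mathcal{M}({\rm \bf f})(x)$ for $0<\delta<1/m$, already present in \cite{LOPTT}, combines with \eqref{FSppstrong} and the pointwise dominance $|T({\rm \bf f})|\le M_\delta(T({\rm \bf f}))$ a.e.\ to yield the strong estimate. The weak form follows from \eqref{FSweak} used with $\varphi(\lambda)=1/\Phi^m(1/\lambda^{1/m})$, which is doubling thanks to $\Phi(s)\approx s(1+\log^+ s)$. The trivial bound $\mathcal{M}({\rm \bf f})\le {\mathcal M}_{L(\log L)}({\rm \bf f})$ closes this step.

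\textbf{Inductive step.} Assume both bounds for all iterated commutators with fewer than $j$ $BMO$ functions. Starting from the pointwise estimate \eqref{lowerorder},
$$M^{\#}_\delta\bigl(\Tpis({\rm \bf f})\bigr)(x) \lesssim \prod_{k=1}^{j}\|b_{\sigma(k)}\|_{BMO}\,{\mathcal M}_{L(\log L)}({\rm \bf f})(x)+\sum_{\eta\subsetneq\sigma}\prod_{k}\|b_{\eta'(k)}\|_{BMO}\,M_\epsilon\bigl(T_{\Pi\,{\rm {\bf b}_\eta}}({\rm \bf f})\bigr)(x),$$
I apply \eqref{FSppstrong} together with $|\Tpis({\rm \bf f})|\le M_\delta(\Tpis({\rm \bf f}))$ a.e.\ to bound $\|\Tpis({\rm \bf f})\|_{L^p(w)}$ by the $L^p(w)$ norm of the right-hand side. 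The principal term contributes exactly the quantity appearing in \eqref{strongnorm}. For the lower-order pieces, since $w\in A_\infty\subset A_{p/\epsilon}$ for $\epsilon$ sufficiently small, $M_\epsilon$ is bounded on $L^p(w)$; the inductive hypothesis then controls $\|T_{\Pi\,{\rm {\bf b}_\eta}}({\rm \bf f})\|_{L^p(w)}$ by $\prod_{k}\|b_{\eta(k)}\|_{BMO}\|{\mathcal M}_{L(\log L)}({\rm \bf f})\|_{L^p(w)}$, and multiplying by the accompanying $BMO$ prefactor produces the full product $\prod_{j=1}^{m}\|b_j\|_{BMO}$. Summing over $\eta$ completes \eqref{strongnorm}. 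The weak-type bound \eqref{weaknorm} is obtained in parallel: apply \eqref{FSweak} with the same $\varphi$, split the super-level set of the right-hand side of the pointwise estimate into principal and lower-order pieces, and control the latter via the inductive weak bound together with the boundedness of $M_\epsilon$ on $L^{p,\infty}(w)$.

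\textbf{Main obstacle.} The principal subtlety lies in guaranteeing the \emph{a priori} finiteness of the left-hand sides of \eqref{FSppstrong} and \eqref{FSweak}, which is a standing hypothesis of both Fefferman-Stein inequalities. Since the input ${\rm \bf f}$ has bounded, compactly supported components, $\Tpis({\rm \bf f})$ has decay of order $|x|^{-mn}\prod(1+\log|x|)$ at infinity, and a truncation argument (replacing $w$ by $w\chi_{B_R}$ and letting $R\to\infty$) secures the required integrability. A secondary technical point is the doubling of $\varphi(\lambda)=1/\Phi^m(1/\lambda^{1/m})$ entering \eqref{FSweak}, which is immediate from the subpolynomial growth of $\Phi$. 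With these verifications in place, the induction closes uniformly across all $\sigma$.
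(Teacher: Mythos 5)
The strong-type half of the proposal follows the paper's own scheme, merely packaged as an explicit induction over the family $\Tpis$ (which the paper performs iteratively in the $m=2$ exposition and calls an ``iterative procedure'' in general). The one place where you deviate is how the lower-order terms are absorbed: the paper applies the Fefferman--Stein inequality \eqref{FSppstrong} a \emph{second} time to replace each $M_\epsilon(T_{\Pi\,{\rm{\bf b}_\eta}}({\rm{\bf f}}))$ by $M^\#_\epsilon(\cdot)$ and then invokes the next pointwise estimate; you instead use the $L^p(w)$-boundedness of $M_\epsilon$ for $\epsilon$ small, exploiting $A_\infty=\bigcup_q A_q$. This works (though your $\epsilon$ becomes $w$-dependent, whereas the paper's choice of small $\epsilon$ is absolute), and it is a legitimate alternative for \eqref{strongnorm}.

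However, there is a real gap in your treatment of the weak-type bound \eqref{weaknorm}. You propose to control the lower-order pieces ``via the inductive weak bound together with the boundedness of $M_\epsilon$ on $L^{p,\infty}(w)$.'' But \eqref{weaknorm} contains no index $p$ at all: the controlling functional is $\sup_{t>0}\Phi^m(1/t)^{-1}\,w(\{\cdot>t^m\})$, which is a genuine $L(\log L)$-type distributional quantity, not a weak-$L^p$ quasinorm. Boundedness of $M_\epsilon$ on $L^{p,\infty}(w)$ compares $\sup_\lambda \lambda^p\,w(\{M_\epsilon g>\lambda\})$ with $\sup_\lambda \lambda^p\,w(\{g>\lambda\})$ and gives no information about the $\Phi^m$-weighted distribution functions that actually appear. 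What is needed, and what the paper uses, is a further application of the weak Fefferman--Stein inequality \eqref{FSweak} (whose conclusion is stated precisely for such $\varphi$-weighted distributional quantities, for \emph{any} doubling $\varphi$) to pass from $M_{\epsilon'}(\cdot)$ to $M^\#_{\epsilon'}(\cdot)$ inside the supremum, after which the pointwise sharp-maximal estimate for the shorter commutator is applied again. Replace the appeal to $L^{p,\infty}$-boundedness of $M_\epsilon$ with another use of \eqref{FSweak} and the induction closes as you intend; as written, that step does not go through.
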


\begin{proof}
The proof of these types of estimates is by now standard. We refer
the reader to \cite[Theorem 1.6]{PTru} and \cite[Corollary 3.8 and
Theorem 3.19]{LOPTT}. The arguments there can be followed step by
step in this new case.  We briefly indicate such arguments in the case $m=2$, but,  as the reader will immediately notice, an iterative procedure using \eqref{lowerorder}  and \eqref{trivial} can be followed to obtain the general case.  

As already mentioned the philosophy in the approach is that terms involving $M_\epsilon$ can actually be treated as ``lower order terms". 
In fact, as in \cite{LOPTT} and using the Fefferman-Stein inequality \eqref{FSppstrong} , %
\begin{eqnarray}\label{tobecontinued}
\|  \Tpi(\vec f)\| _{L^p(w)}& \leq&\|M_\delta ( \Tpi(\vec
f))\|_{L^p(w)}
\nonumber \\
&\leq& C\,\|M^\#_\delta
(\Tpi(\vec f))\|_{L^p(w)}.
\end{eqnarray}
Using the pointwise estimate in the previous theorem and again the Fefferman-Stein inequality we can continue  from \eqref{tobecontinued} with
\begin{eqnarray*}
&\leq& C\, \|b_1\|_{ BMO}  \|b_2\|_{ BMO}\|  \left ( \|{\mathcal M}_{L(\log L)}(\vec
f)\|_{L^p(w)} +\|M_\varepsilon
(T(\vec f))\|_{L^p(w)}\right )\nonumber\\
&& + \,C \left(  \|b_2\|_{BMO} \| M_{\epsilon} (T_{b_1}^1  (f_1,f_2))\|_{L^p(w)}
+ \|b_1\|_{BMO} \|M_{\epsilon}(T_{b_2}^2  (f_1,f_2))\|_{L^p(w)}
\right )\\ \nonumber
&\leq& C\, \|b_1\|_{ BMO}  \|b_2\|_{ BMO}\| \left ( \|{\mathcal M}_{L(\log L)}(\vec f)\|_{L^p(w)}
+\|M^{\#}_\varepsilon
(T(\vec f))\|_{L^p(w)}\right )\nonumber\\
&& + \,C \left(  \|b_2\|_{BMO} \| M_{\epsilon} ^\#(T_{b_1}^1  (f_1,f_2))\|_{L^p(w)}
+ \|b_1\|_{BMO} \|M_{\epsilon}^\#(T_{b_2}^2  (f_1,f_2))\|_{L^p(w)}
\right ).\nonumber
\end{eqnarray*}
If we take $\epsilon$ small, we can now repeat the procedure using the results in \cite{LOPTT}
and estimate
$$\|M^{\#}_\varepsilon (T(\vec f))\|_{L^p(w)} \leq C  \|{\mathcal M}(\vec f)\|_{L^p(w)} 
 \leq C  \|{\mathcal M}_{L(\log L)}(\vec f)\|_{L^p(w)} ;
 $$
 and for $\epsilon < \epsilon'$,
 \begin{eqnarray*}
  \| M_{\epsilon} ^\#(T_{b_1}^1  (f_1,f_2))\|_{L^p(w)} &\leq & C
   \|b_1\|_{BMO} \left( \|{\mathcal M}_{L(\log L)}(\vec f)\|_{L^p(w)} + \|M_{\epsilon'}(T (f_1,f_2))\|_{L^p(w)}
  \right)\nonumber\\
  &\leq & C
   \|b_1\|_{BMO} \left( \|{\mathcal M}_{L(\log L)}(\vec f)\|_{L^p(w)} + \|M_{\epsilon'}^\#(T (f_1,f_2))\|_{L^p(w)}
  \right)\nonumber\\
&\leq&  C\,  \|b_1\|_{BMO}   \|{\mathcal M}_{L(\log L)}(\vec f)\|_{L^p(w)}.
\end{eqnarray*}
Similarly, 
$$
 \| M_{\epsilon} ^\#(T_{b_2}^2  (f_1,f_2))\|_{L^p(w)} \leq  C
  \|b_2\|_{BMO}   \|{\mathcal M}_{L(\log L)}(\vec f)\|_{L^p(w)}.
  $$
The desired inequality now follows.

 We observe that to use the
Fefferman-Stein inequality as argued in
\cite[pp.32-33]{LOPTT},  one needs to verify that  certain terms in the left-hand side 
of the inequalities are finite when the right-hand side ones are (when the right-hand are infinite there is nothing to prove). However, if one assumes ${\rm \bf b }$ in
$(L^\infty)^m$, then everything  is clear because of the boundedness properties
of $T$. The passage to ${\rm \bf b }$ in $BMO^m$ is standard,
and combining it with Fatou's lemma, one gets the desire result.

The proof of \eqref{weaknorm} also follows the pattern for the
corresponding estimate relating $\Tsigma$ and ${\mathcal
M}_{\Sigma\,L(\log L)}$ in \cite[pp. 33-35]{LOPTT}.  
 We also briefly indicate some of the details needed when $m=2$.  
To further simplify the presentation, and since we do not intend to keep track of the exact dependence on ${\rm \bf b }$, we assume that the $BMO$ norms of the functions $b_js$ are equal to one.
Note that  the doubling properties of $\Phi$ will produce a constant 
$c({\rm \bf b})$  in the general case.  It should be noted though that,  unlike the strong case, such constant  is not multilinear in ${\rm \bf b }$.

Using the pointwise estimate for $M^\#_\delta (\Tpi(\vec f))$ we get 
$$
\sup_{t>0}\frac{1}{\Phi^2 (\frac{1}{t}) }w( \{ y\in\mathbb R ^{n}
|\Tpi \,({\rm \bf f})(y)| >  t^2 \} ) \leq  \sup_{t>0}\frac{1}{\Phi^2 (\frac{1}{t}) }w( \{ y\in\mathbb R ^{n}
M_{\delta}(\Tpi \,({\rm \bf f})(y)) >  t^2 \} )
$$
\begin{eqnarray*}
&\leq&\sup_{t>0}\frac{1}{\Phi^2 (\frac{1}{t}) }w\left(\left\{M^\#_\delta
(\Tpi(\vec f))(x)>t^2 \right\} \right)\\
&\leq& C\, \sup_{t>0}\frac{1}{\Phi^2 (\frac{1}{t}) } w\left(\left\{
{\mathcal M}_{L(\log L)}(f_1,f_2)(x)>t^2\right\}\right)\\
&&+C\sup_{t>0}\frac{1}{\Phi^2 (\frac{1}{t}) }w\left(\left\{ \,M_\varepsilon (T(f_1,f_2))(x)>t^2\ \right\}\right)\\
&&+ C  \sup_{t>0}\frac{1}{\Phi^2 (\frac{1}{t}) }w\left( \left\{M_{\epsilon} (T_{b_1}^1  (f_1,f_2))(x)>t^2\right\}\right)\\
&&+ C \sup_{t>0}\frac{1}{\Phi^2 (\frac{1}{t}) }w\left(\left\{M_{\epsilon}(T_{b_2}^2  (f_1,f_2))(x)>t^2\right\}\right)\\
&=& I+II+III+IV.
\end{eqnarray*}
We claim that the main term is  $I$, which will give the desired result. In fact, from the estimates
\begin{equation}\label{uno}
M^\#_\varepsilon (T(f_1,f_2))(x)\leq {\mathcal M}(\vec f)(x)\leq {\mathcal M}_{L\log L}(\vec f)(x)
\end{equation}
and the weak-type version of the Fefferman-Stein inequality  \eqref{FSweak} we easily get that  $II\lesssim I$.

To estimate $III$ we invoke again   \eqref{FSweak} and the results for  the $M^\#$  function of commutators of lower order to get  for $\epsilon < \epsilon'$,
 \begin{eqnarray*}
  III&\leq& C  \sup_{t>0}\frac{1}{\Phi^2 (\frac{1}{t}) }w\left( \left\{M^{\#}_{\epsilon} (T_{b_1}^1  (f_1,f_2))(x)>t^2\right\}\right)\\
  &\leq&\sup_{t>0}\frac{1}{\Phi^2 (\frac{1}{t}) }w\left( \left\{{\mathcal M}_{L(\log L)}(\vec f)(x)>t^2\right\}\right)\\
  &&+\sup_{t>0}\frac{1}{\Phi^2 (\frac{1}{t}) }w\left( \left\{M_{\epsilon'}(T (f_1,f_2))(x)>t^2\right\}\right).
  \end{eqnarray*}
 Iterating the procedure and using (\ref{uno}) we  arrive to $III\lesssim I$. The term $IV$ is completely analogous.  Again, to be able to apply \eqref{FSweak}  some justification  is needed. But one can always assume the weight to be bounded and use a limiting process. We refer to \cite{LOPTT} and omit the rest of the details.

\end{proof}

{\it Proof of Theorem 1.1}
We can now easily finish the proof of Theorem~\ref{thmStrong}. Since for
$\vec w$ in $A_{\vec P}$, the weight $\nu_{\vec w}$ is in
$A_\infty$, we can use one more result from \cite{LOPTT} on strong
bounds for ${\mathcal M}_{L(\log L)}$ and conclude from
\eqref{strongnorm} that
$$
\int_{\mathbb R^n} |\Tpi \,({\rm \bf f})(x)|^p \nu_{\vec w}(x) dx
\leq C_{\nu_{\vec w}}\, \prod_{j=1}^m \|b_j\|_{BMO} \, \int_{\mathbb
R^n} ({\mathcal M}_{L(\log L)}({\rm \bf f})(x))^p \nu_{\vec w}(x)dx
$$
$$
\leq C_{\nu_{\vec w}}\, \prod_{j=1}^m \|b_j\|_{BMO} \,\prod_{j=1}^m
\|f_j\|_{L^{p_j}(w_j)} .
$$
\section{Proof of Theorem \ref{commutatorthmllogl}}\label{seccioncuatro}

We start with a new weak type end-point estimate for ${\mathcal
M}_{L(\log L)}$ .

\begin{theorem} \label{maximalthmllogl}
Let $\vec w \in A_{\vec 1}$. Then there exists a constant $C$ such
that
\begin{equation}\label{debilmaximal}
 \nu_{\vec w} \left( \big \{x \in \RN\,:\, {\mathcal M}_{L(\log L)}({\rm \bf f})(x)|>t^m \big \}  \right)
\leq C\, \prod_{j=1}^m \left (\int_{\RN} \Phi^{(m)}\left(
\dfrac{|f_j(x)|}{t} \right)\,\, w_j(x)dx\right )^{1/m}.
\end{equation}
 Morever, this estimate is sharp in the sense that $\Phi^{(m)}$ can not be
replaced by $\Phi^{(k)}$ for $k<m$.
\end{theorem}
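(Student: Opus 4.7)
The plan is to follow the scheme used in \cite{LOPTT} for the weak-type endpoint for the simpler multilinear maximal function $\mathcal M(\vec f)(x)=\sup_{Q\ni x}\prod_j\frac{1}{|Q|}\int_Q|f_j|$, but with a sharper per-cube integral estimate tailored to the iterated Young function $\Phi^{(m)}$.

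By the homogeneity of the Luxemburg norm one has $\mathcal M_{L(\log L)}(\vec f/t)=t^{-m}\mathcal M_{L(\log L)}(\vec f)$, so it suffices to treat $t=1$. For each $x$ in the level set $E=\{\mathcal M_{L(\log L)}(\vec f)>1\}$, pick a cube $Q_x\ni x$ with $\prod_j\|f_j\|_{L(\log L),Q_x}>1$; a Vitali-type selection yields a disjoint family $\{Q_k\}$ with $E\subset\bigcup_k 5Q_k$. Since $\vec w\in A_{\vec 1}$, the weight $\nu_{\vec w}$ is doubling (indeed $\nu_{\vec w}\in A_m$ by \eqref{cases}) and the $A_{\vec 1}$ condition gives
\[
\nu_{\vec w}(Q_k)\le C\,|Q_k|\prod_{j=1}^m\bigl(\mathop{\mathrm{ess\,inf}}_{Q_k}w_j\bigr)^{1/m}.
\]

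The heart of the proof is the per-cube inequality
\[
|Q_k|\leq C\prod_{j=1}^m\Bigl(\int_{Q_k}\Phi^{(m)}(|f_j|)\,dx\Bigr)^{1/m}\qquad\text{whenever }\prod_j\|f_j\|_{L(\log L),Q_k}>1.
\]
Setting $\alpha_j=\|f_j\|_{L(\log L),Q_k}$, so $\prod_j\alpha_j>1$, the defining relation of the Luxemburg norm gives $\int_{Q_k}\Phi(|f_j|/\alpha_j)\le|Q_k|$, with equality in the generic case. For indices with $\alpha_j\geq 1$ the convexity of $\Phi$ yields $\int_{Q_k}\Phi(|f_j|)\geq \alpha_j|Q_k|$, and $\int\Phi^{(m)}\geq\int\Phi$ takes care of these terms. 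For indices with $\alpha_j<1$, the constraint $\prod_j\alpha_j>1$ forces the compensating $\alpha_{j'}$ to be so large that the extra logarithmic factors of $\Phi^{(m)}$ compared with $\Phi$, reflecting $\Phi^{(m)}(t)\asymp t(\log t)^m$ for large $t$, absorb the deficit. This coupling between the factors, handled by a dyadic case analysis on the sizes of the $\alpha_j$'s, is the main technical obstacle of the argument.

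Combining these estimates, using $\mathop{\mathrm{ess\,inf}}_{Q_k}w_j\le w_j$ on $Q_k$, and applying H\"older's inequality for sums over the disjoint cubes gives
\[
\nu_{\vec w}(E)\leq C\sum_k\prod_{j=1}^m\Bigl(\int_{Q_k}\Phi^{(m)}(|f_j|)w_j\Bigr)^{1/m}\leq C\prod_{j=1}^m\Bigl(\int_{\mathbb R^n}\Phi^{(m)}(|f_j|)w_j\Bigr)^{1/m},
\]
which is the desired inequality. Sharpness follows the template already displayed in the proof of Theorem~\ref{commutatorthmllogl}: testing a putative improvement with $\Phi^{(k)}$ in place of $\Phi^{(m)}$ against $f_j=\chi_{(0,1)}$ and the dilated cubes $Q=(0,R)$, direct computation of $\|f_j\|_{L(\log L),Q}$ and use of the asymptotic $\Phi^{-1}(x)\approx x/\log x$ as $x\to\infty$ together with the homogeneity of the candidate level-set inequality leads to a contradiction for $k<m$.
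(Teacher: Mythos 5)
Your scaffolding matches the paper's: reduce to $t=1$, cover the level set by a disjoint (Vitali) family of cubes on which $\prod_j\|f_j\|_{L(\log L),Q}>1$, use the $A_{\vec 1}$ condition to replace $\nu_{\vec w}(Q)$ by $|Q|\prod_j(\inf_Q w_j)^{1/m}$, and finish with H\"older over the disjoint cubes. You also correctly identify the per-cube inequality
\[
|Q|\leq C\prod_{j=1}^m\Bigl(\int_{Q}\Phi^{(m)}(|f_j|)\,dx\Bigr)^{1/m}\qquad\text{whenever }\prod_j\|f_j\|_{L(\log L),Q}>1
\]
as the heart of the matter, and the sharpness discussion is correctly deferred to the computation already in the text.

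However, the proof of the per-cube inequality is exactly where your argument stops. You handle the easy case $\alpha_j=\|f_j\|_{L(\log L),Q}\geq 1$ (superadditivity gives $\int_Q\Phi(|f_j|)\geq\alpha_j|Q|$, and $\Phi^{(m)}\geq\Phi$), but for indices with $\alpha_j<1$ you only assert that ``the extra logarithmic factors of $\Phi^{(m)}$ \ldots absorb the deficit'' via ``a dyadic case analysis,'' and you explicitly call this ``the main technical obstacle'' without carrying it out. This is not a small omission: a direct attempt using submultiplicativity gives only $\int_Q\Phi(|f_j|)\geq |Q|/\Phi(1/\alpha_j)\approx \alpha_j|Q|/(1+\log(1/\alpha_j))$, so the factor $\alpha_j$ really is lost by a logarithm, and how those logarithms distribute across the $m$ entries under the single constraint $\prod_j\alpha_j>1$ is precisely where the iterated composition $\Phi^{(m)}$ is forced. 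The paper handles this by a genuine iterative mechanism you have not reproduced: partition the selected cubes into classes $B_\sigma$ ($\sigma\in C^m_h$) according to which $h$ of the norms exceed $1$, set $\Pi_k=\prod_{j\leq k}\|f_{\sigma(j)}\|_{\Phi,Q}$ (noting $\Pi_k>1$ for all $k$), start from $1<\frac1{|Q|}\int_Q\Phi(f_{\sigma(m)})\Phi(\Pi_{m-1})$, and iterate the bound
\[
\Phi^{j}(\Pi_{m-j})\leq C\,\frac{1}{|Q|}\int_Q\Phi^{j+1}(f_{\sigma(m-j)})\,\Phi^{j+1}(\Pi_{m-j-1}),
\]
which is obtained from submultiplicativity of $\Phi$ and the equivalence $\|f\|_{\Phi,Q}\simeq\inf_\mu\{\mu+\frac\mu{|Q|}\int_Q\Phi(|f|/\mu)\}$. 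Each iteration composes one more $\Phi$, which is precisely how $\Phi^{(m)}$ appears. Without this (or an equivalent) argument, your proposal does not establish the per-cube inequality, and hence does not prove the theorem.
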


\begin{proof}

Our goal is to estimate $|\Omega|=|\{{\mathcal M}_{L(\log
L)}(f_1,f_2,....,f_m)>1\}|$. The set $\Omega$ is open and we may
assume it to be not empty.
 It is enough then to control the size of every compact set $F$
contained in $\Omega$.

 For
$x\in F$ there exists a cube $Q$ with $x\in Q$ such that

\begin{equation}\label{cubo}
\prod_{i=1}^m \|f_{i}\|_{\Phi,Q}>1.
\end{equation}
Thus, by a covering argument,  we can extract a finite family of
disjoint cubes $\{Q_{i}\}$ whose dilations cover $F$ for which
\begin{equation}\label{uno1}
| F|\leq C\sum_{i}|Q_i|
\end{equation}
and $\{Q_i\}$ satisfies
\begin{equation*}
\prod_{i=1}^m \|f_{i}\|_{\Phi,Q_i}>1.
\end{equation*}

We use again the  notation  $C_h^m$ for the
family of all subset $\sigma=(\sigma(1),...,\sigma(h))$ extracted from  the
set of indeces $\{1,...,m\}$ using $1\leq h\leq m$ different elements.  Given
$\sigma\in C_h^m$ and a cube $Q_i$, we say that $i\in B_{\sigma}$ if
$\|f_{\sigma(k)}\|_{\Phi,Q_i}>1$ for $k=1,...,h$ and
$\|f_{\sigma(k)}\|_{\Phi,Q_i}\leq1$ for $k=h+1,...,m$.

Let us  consider  $\sigma\in C_h^m$ and $i\in B_{\sigma}$. Denote
$$\Pi_k=\prod_{j=1}^k\|f_{\sigma(j)}\|_{\Phi, Q_i}$$
and $\Pi_0=1$. Then it is easy to check that $\Pi_k>1$ for every
$1\leq k\leq m$. It follows that
\begin{equation*}
1<\Pi_{k}=\|f_{\sigma(k)}\|_{\Phi, Q_i}\,
\Pi_{k-1}=\|f_{\sigma(k)}\Pi_{k-1}\|_{\Phi, Q_i}
\end{equation*}
or, equivalently  (by \eqref{biggerone})
\begin{equation}\label{12}
\frac{1}{|Q_i|}\int_{Q_i} \Phi\left(f_{\sigma(k)}\,
\Pi_{k-1}\right)>1.
\end{equation}
In particular,
\begin{equation}\label{13}
1<\frac{1}{|Q_i|}\int_{Q_i} \Phi\left(f_{\sigma(m)}\,
\Pi_{m-1}\right)\leq\frac{1}{|Q_i|}\int_{Q_i}
\Phi\left(f_{\sigma(m)}\right) \Phi\left(\Pi_{m-1}\right).
\end{equation}
Now, by taking into account the following equivalence
\begin{equation*}
\|f\|_{\Phi, Q}\simeq \inf_{\mu>0}\{\mu+\frac{\mu}{|Q|}\int_Q
\Phi(|f|/\mu)\},
\end{equation*}
if $1\leq j\leq m-h-1$, by $(\ref{12})$ we get
\begin{eqnarray*}
\Phi^j(\Pi_{m-j})&=&\Phi^j(\|f_{\sigma(m-j)}\Pi_{m-j-1}\|_{\Phi,
Q_i})\\
&\leq&C\Phi^j\left(1+\frac{1}{|Q_i|}\int_{Q_i}
\Phi\left(f_{\sigma(m-j)}\, \Pi_{m-j-1}\right)\right)\\
&\leq&C\frac{1}{|Q_i|}\int_{Q_i}
\Phi^{j+1}\left(f_{\sigma(m-j)}\right)\,
\Phi^{j+1}\left(\Pi_{m-j-1}\right).
\end{eqnarray*}
From $(\ref{13})$, by iterating the inequality above, we obtain
\begin{eqnarray*}
1&<&C\frac{1}{|Q_i|}\int_{Q_i}
\Phi\left(f_{\sigma(m)}\right)\frac{1}{|Q_i|}\int_{Q_i}
\Phi^2\left(f_{\sigma(m-1)}\right)\Phi^2\left(\Pi_{m-2}\right)\\
&\leq&C\left(\prod_{j=0}^{m-h-1}\frac{1}{|Q_i|}\int_{Q_i}
\Phi^{j+1}\left(f_{\sigma(m-j)}\right)\right)\Phi^{m-h}\left(\Pi_{h}\right)\\
&\leq&C
\left(\prod_{j=0}^{m-h-1}\frac{1}{|Q_i|}\int_{Q_i}\Phi^{j+1}\left(f_{\sigma(m-j)}\right)\right)
\left(\prod_{j=1}^h\Phi^{m-h}(\|f_{\sigma(j)}\|_{\Phi, Q_i})\right).
\end{eqnarray*}
since $\Phi$ is submultiplicative.

Thus, since $i\in B_{\sigma}$, we have  $\|f_{\sigma(j)}\|_{\Phi,
Q_i}>1$ for $j=1,...h$, and it follows
\begin{equation}\label{cuatro}
1<C\left(\prod_{j=0}^{m-h-1}\frac{1}{|Q_i|}\int_{Q_i}\Phi^{j+1}\left(f_{\sigma(m-j)}\right)\right)
\left(\prod_{j=1}^h\frac{1}{|Q_i|}\int_{Q_i}\Phi^{m-h+1}(f_{\sigma(j)})\right).
\end{equation}

Now, since for $1\leq h\leq m$ and $0\leq j\leq m-h-1$ we have that
$\Phi^{j+1}(t)\leq \Phi^{m-h}(t)\leq \Phi^m(t)$ and
$\Phi^{m-h+1}(t)\leq \Phi^m(t)$, we deduce
\begin{equation*}
1<C\prod_{j=1}^m\frac{1}{|Q_i|}\int_{Q_i}\Phi^{m}(f_{j})
\end{equation*}
or equivalently
\begin{equation*}
|Q_i|<C\prod_{j=1}^m\left(\int_{Q_i}\Phi^{m}(f_j)\right)^{1/m}.
\end{equation*}
Thus, going back to $(\ref{uno1})$ it follows that
\begin{eqnarray*}
\nu_{\vec w} \big(F \big)^m  & \approx & \left(\sum_i \nu_{\vec
w}(Q_i)\right)^m\\
 &\le &\left (\sum_{h=1}^{m}\sum_{\sigma\in
C_{h}^{m}}\sum_{i\in B_{\sigma}}\nu_{\vec
w}(Q_i)\right )^m\\
&\leq&C\left (\sum_{h=1}^{m}\sum_{\sigma\in C_{h}^{m}}\sum_{i\in
B_{\sigma}}\prod_{j=1}^m  \inf_{Q_i} w_j^{1/m}
|Q_i|^{1/m} \left ( \frac{1}{|Q_i|}\int_{Q_i}\Phi^{m}(f_{j})\right)^{1/m}\right )^m\\
&\leq&C\prod_{j=1}^m   \left (\int_{\mathbb R^n} \Phi^{m}( f_j(y))
w_j(y)\, dy \right)
\end{eqnarray*}
which concludes the proof of \eqref{debilmaximal}

We now prove that the estimate \eqref{debilmaximal} is sharp in the
sense stated in theorem.

We claim that the following estimate is false
\begin{equation}\label{impossible}
|\{ x: {\mathcal M}_{L(\log L)}({\rm \bf f }) > \la^{m}\} | \leq C\,
 \left(
\prod_{j=1}^m \|
\Phi^{m-1}(\frac{|f_{j}|}{\la})\|_{L^{1}}\right)^{1/m}
\end{equation}
 We let
$\la=1$ and then the estimate to be studied is
\begin{equation}
|\{ x: {\mathcal M}_{L(\log L)}({\rm \bf f }) >1 \} |^m \leq
C\,\prod_{j=1}^m \| \Phi^{m-1}({|f_{j}|})\|_{L^{1}}
\end{equation}
for any ${\rm \bf f }$ with all the components positives. Hence by
the same homogeneity we replacing $f_1$ by \,$\frac{f_1}{\la^m}$
\begin{equation}\label{impossiblem=2}
|\{ x: {\mathcal M}_{L(\log L)}({\rm \bf f }) > \la^m\} |^m \leq C\,
{\int_{\R} \Phi^{m-1}\left(\frac{f_1}{\la^m}\right)\prod_{j=2}^m
\int_{\R} \Phi^{m-1}\left(f_j\right)}
\end{equation}

Now, let $f_j=\chi_{(0,1)}$. If \eqref{impossiblem=2} holds, since
$\Phi$ is a Young function, we conclude
\begin{equation}\label{debilconmutacontraejemplo}
\sup_{\la>0} \frac{1}{\Phi{^{m-1}}(\la^{-m})} |\{x\in
\R\,:\,{\mathcal M}_{L(\log L)}({\rm \bf f })(x)|>\la^m\}|^m\leq C.
\end{equation}
However, observe that, by definition of ${\mathcal M}_{L(\log
L)}({\rm \bf f })$ and of \,$\|.\|_{L(\log L),Q}$, it follows for
any subset $A$ that
$\|\chi_A\|_{\Phi,Q}=\frac{1}{\Phi^{-1}(\frac{|Q|}{|A\cap Q|})}$.
Hence, if  $x>e$ we have
$${\mathcal M}_{L(\log L)}({\rm \bf f })(x) \geq \|\chi_{(0,1)}\|_{L(\log L),(0,x)}^m=
\frac{1}{\Phi^{-1}(x)^m}  $$
Thus, {taking into account that $\phi^k(t)\cong t(1+\log^+ t)^k$ },
the left-hand side of \eqref{debilconmutacontraejemplo} is bigger
than
\begin{eqnarray*}
 \sup_{\la>0}
\frac{1}{\Phi{^{m-1}}(\la^{-m})} |\{x > e:\,
\frac{1}{\Phi^{-1}(x)}>\la\}|^m &\geq &\sup_{0<\la<{1/e}} \frac{
\left(\Phi(\frac{1}{\la})-e\right)^{{m}}
}{\Phi{^{m-1}}(\frac{1}{\la^{{m}}}) }\\
&\geq& \frac1{{2^m}}\,\sup_{0<\la<\frac{1}{2e}} \frac{
\left(\Phi(\frac{1}{\la})\right)^{{m}} }{\Phi{^{m-1}}(\frac{1}{\la^{{m}}}) }\\
&{\geq}&{\frac{C}{m^{m-1}2^{m}}}\,\sup_{0<\la<\frac{1}{2e}} \log \frac{1}{\la}\\
&=&\infty
\end{eqnarray*}

\end{proof}

Given \eqref{weaknorm} and \eqref{debilmaximal} the proof of
Theorem~\ref{commutatorthmllogl} is almost routine. The reader can
see \cite[pp.38-39]{LOPTT} and easily adapt the arguments.



\begin{thebibliography}{99}
















\bibitem{CF}
R.R. Coifman and C. Fefferman, {\it Weighted norm inequalities for
maximal functions and singular integrals}, Studia Math. {\bf 51}
(1974), 241--250.





\bibitem{CRW}
R. Coifman, R. Rochberg and G. Weiss, {\it Factorization theorems
for Hardy spaces in several variables}, Ann. of  Math. {\bf 103}
(1976), 611--635.




%

%



\bibitem{FS2} C. Fefferman and  E.M. Stein, {\it $H^p$ spaces of several
variables}, Acta Math., {\bf 129} (1972), 137--193.




%


\bibitem{GLPT} L.  Grafakos, L. Liu, C. P\'erez and R.H. Torres, {\it The multilinear strong maximal function}, submitted.


\bibitem{GT4}
L. Grafakos and R.H. Torres, {\it Multilinear Calder\'on-Zygmund
theory}, Adv. Math. {\bf 165} (2002), no. 1, 124--164.

\bibitem{GT5}
L. Grafakos and R.H. Torres, {\it Maximal operator and weighted norm
inequalities for multilinear singular integrals}, Indiana Univ.
Math. J., {\bf 51} (2002), no. 5, 1261--1276.

\bibitem{GT6}
L. Grafakos and R.H. Torres, {\it On multilinear singular integrals
of Calder\'on-Zygmund type}, Proceedings of the 6th International
Conference on Harmonic Analysis  and Partial Differential Equations
(El Escorial, 2000). Publ. Mat. 2002, Vol. Extra, 57--91.



%

%





\bibitem{LOPTT} A. Lerner, S. Ombrosi, C. P\'erez, R. Torres and R. Trujillo-Gonz\'alez,  {\it New maximal functions and multiple weights for the multilinear Calder\'on-Zygmund theory} Adv. Math. {\bf 220} (4)  (2009), 1222-1264


%











\bibitem{P1} C. P\'erez, {\it Endpoint estmates for commutators of singular integral operators}, J. Funct. Anal. {\bf 128} (1995), 163--185.





\bibitem{PP} C. P\'erez and G. Pradolini, {\it Sharp
weighted endpoint estimates for commutators of singular integral
operators}, Michigan Math. J., {\bf 49} (2001), 23--37.

\bibitem{PT}
C. P\'erez and R.H. Torres, {\it Sharp maximal function estimates
for multilinear singular integrals}, Contemp. Math., {\bf 320}
(2003), 323--331.

\bibitem{PTru}
C. P\'erez and R. Trujillo-Gonz\'alez, {\it Sharp weighted estimates
for multilinear commutators}, J. London Math. Soc., {\bf 65} (2002),
672--692.










\bibitem{tang} L. Tang, {\it Weighted estimates for vector-valued commutators
of multilinear operators},  Proc. Roy. Soc. Edinburgh Sect. A  {\bf
138}  (2008), 897--922.




\bibitem{W} M. Wilson, {\it Littlewood-Paley Theory and Exponential-Square Integrability
,} Lectures Notes in Math., Springer Verlag, (To appear October
2007).




\end{thebibliography}
\end{document}